%
\documentclass[12pt, reqno]{amsart}
\usepackage{amsmath, amsthm, amscd, amsfonts, amssymb, graphicx, color, mathrsfs}
\usepackage[bookmarksnumbered, colorlinks, plainpages]{hyperref}
\usepackage[all]{xy} 
\usepackage{slashed}



\setlength{\textwidth}{15.2cm}
\setlength{\textheight}{22.7cm}
\setlength{\topmargin}{0mm}
\setlength{\oddsidemargin}{3mm}
\setlength{\evensidemargin}{3mm}
\setlength{\footskip}{1cm}


\newtheorem{theorem}{Theorem}[section]
\newtheorem{lemma}[theorem]{Lemma}

\theoremstyle{definition}
\newtheorem{definition}[theorem]{Definition}

\theoremstyle{remark}
\newtheorem{remark}[theorem]{Remark}
\numberwithin{equation}{section}

\begin{document}
\setcounter{page}{1}

\title[Fourier multipliers  in Triebel-Lizorkin spaces]{Fourier multipliers for Triebel-Lizorkin spaces on graded Lie groups}

\author[D. Cardona]{Duv\'an Cardona}
\address{
  Duv\'an Cardona S\'anchez:
  \endgraf
  Department of Mathematics: Analysis, Logic and Discrete Mathematics
  \endgraf
  Ghent University, Belgium
  \endgraf
  {\it E-mail address} {\rm duvanc306@gmail.com,\, Duvan.CardonaSanchez@ugent.be }
  }

\author[M. Ruzhansky]{Michael Ruzhansky}
\address{
  Michael Ruzhansky:
  \endgraf
  Department of Mathematics: Analysis, Logic and Discrete Mathematics
  \endgraf
  Ghent University, Belgium
  \endgraf
 and
  \endgraf
  School of Mathematical Sciences
  \endgraf
  Queen Mary University of London
  \endgraf
  United Kingdom
  \endgraf
  {\it E-mail address} {\rm Michael.Ruzhansky@ugent.be}
  }

\thanks{The authors are supported by the FWO Odysseus 1 grant G.0H94.18N: Analysis and Partial Differential Equations. MR is also supported in parts by the EPSRC grant
EP/R003025/2.
}

     \keywords{ Fourier multipliers, spectral multipliers, graded Lie groups, H\"ormander-Mihlin Theorem, Triebel-Lizorkin spaces}
     \subjclass[2010]{43A15, 43A22; Secondary 22E25, 43A80}

\begin{abstract} 
In this work we investigate the boundedness of Fourier multipliers on Triebel-Lizorkin spaces associated to positive Rockland operators on a graded Lie group. The found criterion  is expressed in terms of the H\"ormander-Mihlin condition on the global symbol of a Fourier multiplier.
\end{abstract} 

\maketitle

\tableofcontents
\allowdisplaybreaks

\section{Introduction} 
\subsection{Outline}This paper is devoted to  the boundedness of Fourier multipliers of H\"ormander-Mihlin type for Triebel-Lizorkin spaces on graded Lie groups. The boundedness of Fourier multipliers of H\"ormander-Mihlin type has become an indispensable tool in  harmonic analysis. The classical Mihlin multiplier theorem \cite{Mihlin} states that if a function $\sigma\in C^{\infty}(\mathbb{R}^{n}\setminus \{0\}),$ satisfies 
\begin{equation}
    |\partial_{\xi}^{\alpha}\sigma(\xi)|\lesssim_{\alpha} |\xi|^{-|\alpha|},\,\,\,\,|\alpha|\leq [n/2]+1,
\end{equation}then the multiplier $A$ (of the Fourier transform\footnote{ Defined for $f\in C^{\infty}_0(\mathbb{R}^n),$ by $\widehat{f}(\xi):=\int\limits_{\mathbb{R}^n}e^{-2\pi i x\cdot \xi}f(x)dx.$ } on $\mathbb{R}^n$) defined by
\begin{equation}
    Af(x)\equiv T_{\sigma}f(x):=\int\limits_{\mathbb{R}^n}e^{2\pi i x\cdot \xi}\sigma(\xi)\widehat{f}(\xi)d\xi,\,\,f\in C^{\infty}_0(\mathbb{R}^n),
\end{equation}admits a bounded extension on $L^p(\mathbb{R}^n),$ for $1<p<\infty.$ H\"ormander's generalisation of Mihlin's result  in \cite{Hormander1960} guarantees the $L^p$-boundedness of an extension of $A$ under the Sobolev condition
\begin{equation}\label{hormandercondition}
\Vert \sigma\Vert_{l.u.L^2_s}:=\sup_{r>0}\Vert \sigma(r\cdot)\eta(\cdot) \Vert_{L^2_s(\mathbb{R}^n)}<\infty,\,\, \,\,
\end{equation}
where $\eta\in \mathscr{D}(0,\infty),$ $\eta\neq 0,$ and  $s>n/2.$ Later, Calder\'on and  Torchinsky in  \cite{CalderonTorchinsky} extended the H\"ormander-Mihlin theorem to Hardy spaces $H^{p}(\mathbb{R}^n)$ by proving that  $A:H^p({\mathbb{R}}^n)\rightarrow H^p({\mathbb{R}}^n)$ admits a bounded extension provided that \eqref{hormandercondition} holds with $s>n(1/p-1/2)$ and $0<p\leq 1.$ A different proof to the one by  Calder\'on and  Torchinsky was done by Taibleson and Weiss in \cite{TaiblesonWeiss}. The endpoint for the H\"ormander-Mihlin condition in Hardy spaces was found by Baernstein and Sawyer in \cite{BS}. The existence of bounded extensions $A:H^{1}(\mathbb{R}^n)\rightarrow L^{1,2}(\mathbb{R}^n)$  was investigated by Seeger in \cite{Seeger1} and \cite{Seeger2} by considering the Besov condition on the symbol  $\sigma\in B^{2}_{\frac{n}{2},1}(\mathbb{R}^n).$  These estimates were extended to Triebel-Lizorkin spaces  by Seeger in  \cite{Seeger3}. We also refer to the recent paper \cite{Park} of Park  for the generalisation of Seeger's results for Triebel-Lizorkin spaces $F^{r}_{p,q}(\mathbb{R}^n),$ related to the H\"ormander-Mihlin condition.

Because of the numerous applications of the H\"ormander-Mihlin condition on $\mathbb{R}^n$ to the Euclidean harmonic analysis, this condition is also of interest for the harmonic analysis of non-commutative structures, namely, Lie groups and other spaces of homogeneous type (see Coifman and De Guzm\'an \cite{CoifmandeGuzman}). In the context on Lie groups,  the H\"ormander-Mihlin conditon   was extended in \cite{RuzhanskyWirth2015} to arbitrary compact Lie groups\footnote{and also extended in \cite[Section 5]{SubellipticCalculus} to subelliptic Fourier multipliers on compact Lie groups.} generalising the same condition for SU(2) given by Coifman and Weiss in \cite{CoifmanWeiss}. In the case of a graded Lie group $G$, it was proved in  \cite{FR} that the H\"ormander-Mihlin condition for a multiplier $A\equiv T_{\sigma}$ (of the Fourier transform\footnote{which, on a graded Lie group $G,$ with unitary dual $\widehat{G},$ is defined for $f\in C^{\infty}_0(G)$ by $\widehat{f}(\pi):=\int\limits_{G}f(x)\pi(x)^{*}dx,$ at $\pi\in \widehat{G}.$ The Fourier inversion formula is given by  $f(x)=\int\limits_{\widehat{G}}\textnormal{Tr}[\pi(x)\widehat{f}(\pi)]d\pi,$ where $d\pi$ is the Plancherel measure on $\widehat{G}.$ In this way, a Fourier multiplier $A$ is determined by the identity $\widehat{Af}(\pi)=\sigma(\pi)\widehat{f}(\pi),$ for a.e. $\pi\in \widehat{G}.$  } on a graded Lie group $G$), which is defined by
\begin{equation*}
    Af(x)\equiv T_{\sigma}f(x):=\int\limits_{G}\textnormal{Tr}[\pi(x)\sigma(\pi)\widehat{f}(\pi)]d\pi,\,\,f\in C^{\infty}_0(G),
\end{equation*}
implies the existence of a bounded extension of $A$ on $L^p(G),$ for $1<p<\infty.$ This estimate was extended for Hardy spaces in \cite{HongHuRuzhansky2020} generalising the theorem of Calder\'on and Torchinsky to the setting of graded Lie groups. We refer the reader to \cite[Page 39]{CardonaRuzhanskyBesovSpaces} and to \cite{CR} where the H\"ormander-Mihlin condition for right Besov spaces on graded Lie groups was discussed. 

In this work we investigate the H\"ormander-Mihlin  condition for multipliers on Triebel-Lizorkin spaces $F^{r}_{p,q}(G)$ on a graded Lie group $G,$ extending in  Theorem \ref{HMTTL} the estimate of Seeger \cite{Seeger3}  for multipliers in Triebel-Lizorkin spaces $F^{r}_{p,q}(\mathbb{R}^n)$ on $\mathbb{R}^n.$

\subsection{H\"ormander-Mihlin condition for Triebel-Lizorkin spaces}

In order to present our main result, let us introduce the required preliminaries. First we present the H\"ormander-Mihlin condition for  graded Lie groups as introduced in  \cite{FR}.  By a graded Lie group $G,$ we mean   a connected and simply connected nilpotent Lie group $G$ whose Lie algebra $\mathfrak{g} $ may be decomposed as the sum of subspaces 
$$\mathfrak{g}=\mathfrak{g}_{1}\oplus\mathfrak{g}_{2}\oplus \cdots \oplus \mathfrak{g}_{s},$$
such that $[\mathfrak{g}_{i},\mathfrak{g}_{j} ]\subset \mathfrak{g}_{i+j},$ and $ \mathfrak{g}_{i+j}=\{0\}$ if $i+j>s.$ The homogeneous dimension of $Q$ is defined by
\begin{equation*}
    Q:=\sum_{\ell=1}^{s}\ell \cdot \dim(\mathfrak{g}_s).
\end{equation*}
Graded Lie groups include the Euclidean espace $\mathbb{R}^n$, the Heisenberg group $\mathbb{H}^n$  and  any stratified group. Now, consider a positive Rockland operator\footnote{These are linear left invariant homogeneous hypoelliptic partial differential operators, in view of the Helffer and Nourrigat's resolution of the Rockland conjecture in \cite{helffer+nourrigat-79}. Such operators always exist on graded Lie groups and, in fact, the existence of such operators on nilpotent Lie groups does characterise the class of graded Lie groups (c.f. \cite[Section 4.1]{FR2}).} $\mathcal{R}$ of homogeneous degree $\nu>0.$ To define the Triebel-Lizorkin spaces associated to $\mathcal{R},$ let us fix $\eta\in C^{\infty}_0(\mathbb{R}^{+},[0,1]),$ $\eta\neq 0,$ so that $\textnormal{supp}(\eta)\subset [1/2,2],$ and such that 
\begin{equation}
    \sum_{j\in \mathbb{Z}}\eta(2^{-j}\lambda)=1,\,\,\lambda>0.
\end{equation}  Fixing $\psi_0(\lambda):=\sum_{j=-\infty}^{0} \eta_j(\lambda),$ and for $j\geq 1,$ $\psi_j(\lambda):=\eta(2^{-j}\lambda),$ we  have
\begin{equation}
    \sum_{\ell=0}^{\infty}\psi_\ell(\lambda)=1,\,\,\lambda>0,
\end{equation} and one can define the family of operators $\psi_j(\mathcal{R})$ using the functional calculus of $\mathcal{R}.$ Then, for $0<q<\infty,$ and $1<p<\infty,$ the Triebel-Lizorkin space $F^{r}_{p,q}(G)$ consists of the distributions $f\in \mathscr{D}'(G)$ such that
\begin{equation*}
 \Vert f\Vert_{F^{r}_{p,q}(G)}:=   \left\Vert\left(\sum_{\ell=0}^{\infty}2^{\frac{\ell r q}{\nu}}\left|\psi_{\ell}(\mathcal{R})f\right|^{q}\right)^{\frac{1}{q}} \right\Vert_{L^p(G)}<\infty.
\end{equation*}The weak-$F^{r}_{1,q}(G)$ space  is defined by the distributions $f\in \mathscr{D}'(G)$ such that
\begin{equation}
 \Vert f \Vert_{\textrm{weak-}F^{r}_{1,q}(G)}    :=   \sup_{t>0}t\left|\left\{x\in G:\left(\sum_{\ell=0}^\infty2^{\frac{\ell rq}{\nu}}|\psi_\ell(\mathcal{R}) f(x)|^q   \right)^{\frac{1}{q}}>t \right\}\right|<\infty.
\end{equation}
In terms of the Sobolev spaces $L^2_s(\widehat{G})$ on the unitary dual  (see \eqref{SobolevhatG} for details), and of the family of dilations  $\{\sigma(r\cdot \pi)\}_{\pi\in \widehat{G}},$ $r>0,$ of the symbol $\sigma$ of a Fourier multiplier $A\equiv T_\sigma$, the H\"ormander-Mihlin condition takes the form\footnote{Here $\pi(\mathcal{R}),$ a.e. $\pi \in \widehat{G},$ is the symbol of $\mathcal{R},$ characterised by the condition $\widehat{\mathcal{R}f}(\pi)=\pi(\mathcal{R})\widehat{f},$ a.e. $\pi \in \widehat{G},$ and $\eta(\pi(\mathcal{R}))$ is defined by the spectral calculus of Rockland operators (see \cite[Page 178]{FR2}).} (see Subsection \ref{HMLpgradedsection} for details)
\begin{equation}
\Vert \sigma \Vert_{L^2_{s},l.u, R,\eta, \mathcal{R} }:=\sup_{r>0}\Vert\{\sigma(r\cdot \pi)\eta(\pi(\mathcal{R}))   \} \Vert_{L^2_{s}(\widehat{G})}<\infty,
\end{equation}
and 
\begin{equation}
\Vert \sigma \Vert_{L^2_{s},l.u, L,\eta, \mathcal{R} }:=\sup_{r>0}\Vert \{\eta(\pi(\mathcal{R}))\sigma(r\cdot \pi)   \} \Vert_{L^2_{s}(\widehat{G})}<\infty.
\end{equation}

The following theorem is the main result of this work.
\begin{theorem}\label{HMTTL}
Let $G$ be a graded Lie group of homogeneous dimension $Q$. Let $\sigma\in L^{2}(\widehat{G}).$ If 
\begin{equation}\label{LPCRUZFIS}
    \Vert \sigma \Vert_{L^2_{s},l.u, L,\eta, \mathcal{R} },\Vert \sigma \Vert_{L^2_{s},l.u, R,\eta, \mathcal{R} }<\infty,
\end{equation} with $s>Q/2,$ then the corresponding multiplier $A\equiv T_{\sigma}$ extends to a bounded operator from  $F^{r}_{p,q}(G)$ into $F^{r}_{p,q}(G)$  for all $1<p,q<\infty,$ and all $r\in \mathbb{R}.$ Moreover
\begin{equation}
\Vert T_{\sigma} \Vert_{ \mathscr{L}(F^{r}_{p,q}(G))    }\leq C  \max\{ \Vert \sigma \Vert_{L^2_{s},l.u, L,\eta, \mathcal{R} },\Vert \sigma \Vert_{L^2_{s},l.u, R,\eta, \mathcal{R} } \},
\end{equation}and for $p=1,$ $A\equiv T_{\sigma}$ admits a bounded extension from $F^{r}_{1,q}$ into $\textrm{weak-}F^{r}_{1,q}(G),$ and
\begin{equation}
\Vert T_\sigma  \Vert_{\mathscr{L}\left(F^{r}_{1,q}(G),\,\textrm{weak-}F^{r}_{1,q}(G)\right)}    \leq C \max\{ \Vert \sigma \Vert_{L^2_{s},l.u, L,\eta, \mathcal{R} },\Vert \sigma \Vert_{L^2_{s},l.u, R,\eta, \mathcal{R} } \},
\end{equation}for any $1<q<\infty.$
\end{theorem}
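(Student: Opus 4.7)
\medskip

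\noindent\textbf{Proof plan.} The plan is to adapt Seeger's argument from \cite{Seeger3}, together with its refinement by Park \cite{Park}, to the non-commutative setting of graded Lie groups, using as black boxes the scalar $L^p$-boundedness of $T_\sigma$ from \cite{FR} and the $H^p\to L^p$-boundedness from \cite{HongHuRuzhansky2020}. The overall strategy is to reduce the $F^{r}_{p,q}(G)$-estimate to a vector-valued Fefferman--Stein type estimate for $T_\sigma$ on $L^p(G;\ell^q)$, and then to commute $T_\sigma$ past the spectral projections $\psi_k(\mathcal{R})$ using the two-sided H\"ormander--Mihlin hypothesis \eqref{LPCRUZFIS}.

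First I would establish a vector-valued extension of the H\"ormander--Mihlin theorem: under \eqref{LPCRUZFIS} and for $1<p,q<\infty$,
\begin{equation*}
\Bigl\Vert \bigl(\sum_{k}|T_\sigma f_k|^{q}\bigr)^{1/q}\Bigr\Vert_{L^p(G)}\,\lesssim\,\Bigl\Vert \bigl(\sum_{k}|f_k|^{q}\bigr)^{1/q}\Bigr\Vert_{L^p(G)},
\end{equation*}
together with the natural weak-$L^1(\ell^q)$ endpoint. This is obtained by revisiting the Calder\'on--Zygmund decomposition used in \cite{FR} and applying it to $\ell^q$-valued data. The point is that the right-convolution kernel $K_\sigma$ of $T_\sigma$ is independent of the index $k$ and satisfies the scalar H\"ormander kernel condition on $G$ implicit in \cite{FR}; this lifts verbatim to the $\ell^q$-valued setting.

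Second, I would specialise the above inequality to $f_k:=2^{kr/\nu}\psi_k(\mathcal{R})f$, so that the right-hand side equals $\Vert f\Vert_{F^{r}_{p,q}(G)}$. It then remains to compare the quantity $\sum_{k} 2^{krq/\nu}|\psi_{k}(\mathcal{R})T_\sigma f|^{q}$ with $\sum_{k}|T_\sigma f_k|^{q}$. Choose a bump $\tilde{\psi}_k$ equal to $1$ on $\mathrm{supp}(\psi_k)$ and supported in a slightly wider annulus, and split
\begin{equation*}
\psi_{k}(\mathcal{R})T_\sigma \;=\; T_{\psi_{k}(\pi(\mathcal{R}))\,\sigma(\pi)\,\tilde{\psi}_{k}(\pi(\mathcal{R}))} \;+\; T_{\psi_{k}(\pi(\mathcal{R}))\,\sigma(\pi)\,(I-\tilde{\psi}_{k}(\pi(\mathcal{R})))}.
\end{equation*}
The first (diagonal) piece factors as $\psi_{k}(\mathcal{R})\,T_\sigma\,\tilde{\psi}_{k}(\mathcal{R})$, whose contribution is absorbed by the vector-valued estimate above applied to the finitely-overlapping family $\tilde{\psi}_{k}(\mathcal{R})f$. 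The second (off-diagonal) piece is controlled by an almost-orthogonality argument of Cotlar--Stein type, exploiting rapid decay of $\psi_{k}(\pi(\mathcal{R}))\sigma(\pi)(I-\tilde{\psi}_{k}(\pi(\mathcal{R})))$ between spectrally separated bands.

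The main obstacle I expect is precisely this commutation/off-diagonal step. In the Euclidean case the corresponding symbols $\psi_k(\xi)\sigma(\xi)(1-\tilde{\psi}_k(\xi))$ vanish identically, but in the present non-commutative setting $\sigma(\pi)$ does not commute with the spectral projections $\psi_{k}(\pi(\mathcal{R}))$, so the off-diagonal remainder is genuinely nonzero. The hypothesis \eqref{LPCRUZFIS} is tailored for exactly this: the norm $\Vert\sigma\Vert_{L^{2}_{s},l.u,L,\eta,\mathcal{R}}$ controls $\sigma$ pre-multiplied by a band of $\mathcal{R}$, while $\Vert\sigma\Vert_{L^{2}_{s},l.u,R,\eta,\mathcal{R}}$ controls the post-multiplied version, and together they furnish the kernel bounds on $\psi_{k}(\pi(\mathcal{R}))\sigma(\pi)\psi_{j}(\pi(\mathcal{R}))$ needed to sum in $|j-k|$. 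The weak-type endpoint at $p=1$ then follows by the same scheme, replacing the strong $L^p(\ell^q)$ bound by its Calder\'on--Zygmund weak-$L^1(\ell^q)$ counterpart.
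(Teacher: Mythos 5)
Your first step (the diagonal vector-valued Calder\'on--Zygmund extension of $T_\sigma$ to $L^p(G;\ell^q)$ with a weak $L^1(\ell^q)$ endpoint, built on the kernel estimates implicit in \cite{FR}) is sound and close in spirit to what the paper does. The genuine gap is your off-diagonal step. You assert that the hypothesis \eqref{LPCRUZFIS} furnishes ``rapid decay'' of $\psi_{k}(\pi(\mathcal{R}))\,\sigma(\pi)\,\psi_{j}(\pi(\mathcal{R}))$ between separated spectral bands, summable in $|j-k|$, but no argument is given and none follows in any straightforward way: the condition \eqref{LPCRUZFIS} controls only finitely many ($s>Q/2$) Sobolev derivatives of the dilated, band-localized symbol, uniformly in the dilation, and says nothing quantitative about how far $\sigma(\pi)$ displaces the spectral subspaces of $\pi(\mathcal{R})$. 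The natural route to such decay (integrating by parts through powers of $\mathcal{R}$, i.e.\ writing $\psi_k(\mathcal{R})=2^{-kN}\mathcal{R}^{N}\widetilde{\psi}_{k,N}(\mathcal{R})$ and moving $\mathcal{R}^{N}$ onto the kernel of $T_\sigma$) costs derivatives that \eqref{LPCRUZFIS} does not provide. The problem is aggravated by two further points you pass over: for $r\neq 0$ your choice $f_k=2^{kr/\nu}\psi_k(\mathcal{R})f$ forces the off-diagonal decay to beat $2^{|j-k||r|/\nu}$ for \emph{every} $r\in\mathbb{R}$, so genuinely rapid decay is needed; and at $p=1$ a Cotlar--Stein argument, being an $L^2$ tool, does not by itself yield the weak-$F^{r}_{1,q}(G)$ bound --- you would need kernel-level off-diagonal estimates compatible with the Calder\'on--Zygmund decomposition. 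So the very obstacle you correctly single out as the heart of the non-commutative case is left unresolved.

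For comparison, the paper never performs this off-diagonal expansion. It first reduces to $r=0$ via the isomorphism $(1+\mathcal{R})^{r/\nu}:F^{r}_{p,q}(G)\to F^{0}_{p,q}(G)$, and then studies the family $W_\ell:=A\psi_\ell(\mathcal{R})$ directly: Lemma \ref{Lemma1} gives uniform weak (1,1) bounds for each $W_\ell$ (hence uniform $L^q$ bounds and boundedness of $W=\{W_\ell\}$ on $L^q(G,\ell^q)$), Lemma \ref{Lemma2} gives a vector-valued weak (1,1) bound for $W$, and both are proved by Calder\'on--Zygmund decompositions whose only input on $\sigma$ is the summable kernel estimate $\mathscr{I}_\ell\lesssim 2^{-\ell\varepsilon_0}$ of \eqref{ThLiWeak} from \cite{FR}; Marcinkiewicz interpolation and duality then give the $L^p(G,\ell^q)$ bounds, and the passage to the Triebel--Lizorkin norm is made by inserting $\psi_{\ell-1}(\mathcal{R})+\psi_{\ell}(\mathcal{R})+\psi_{\ell+1}(\mathcal{R})$, which uses only the exact almost-orthogonality of the $\psi_\ell$ among themselves (Remark \ref{RemarkInterpolation}). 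If you wish to salvage your scheme, the simplest repair is to avoid commuting $\psi_k(\mathcal{R})$ past $T_\sigma$ altogether and to estimate the operators $A\psi_k(\mathcal{R})$ as a family, as the paper does; otherwise you must actually prove the off-diagonal decay of $\psi_k(\pi(\mathcal{R}))\sigma(\pi)\psi_j(\pi(\mathcal{R}))$ under \eqref{LPCRUZFIS}, which is a substantial missing lemma, not a routine adaptation of the Euclidean case where this term vanishes identically.
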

Now, we discuss briefly our result.
\begin{remark}
In the case $G=\mathbb{R}^n,$ and taking $\mathcal{R}=(-\Delta_x)^{\frac{1}{2}},$ where $\Delta_x$ is the negative Laplacian on $\mathbb{R}^n,$ Theorem \ref{HMTTL} recovers the H\"ormander-Mihlin theorem in Seeger \cite{Seeger3} for Triebel-Lizorkin spaces on $\mathbb{R}^n$.
Also, in view of the Littlewood-Paley theorem in \cite{CardonaRuzhanskyBesovSpaces}, $F^{0}_{p,2}(G)=L^p(G),$ for all $1<p<\infty,$ Theorem \ref{HMTTL} recovers the $L^p(G)$-H\"ormander Mihlin theorem in \cite{FR2}. 

\end{remark}
\begin{remark}
The H\"ormander-Mihlin theorem has been extended by several authors to spectral multipliers of Laplacian and sub-Laplacians, and settings that go beyond the Euclidean case.  The literature is so broad that it is impossible to provide complete list here. We refer the reader to \cite{alexo,CardonaDelgadoRuzhansky2019,Sikora} and to the extensive list of references therein.
\end{remark}
\section{Preliminaries}\label{Preliminaries}

\noindent In this section, we recall  some preliminaries  on graded and homogeneous Lie groups $G$. The unitary dual of these groups will be denoted by $\widehat{G}$. We  also present the notion of Rockland operators and Sobolev spaces on $G$ and on the unitary dual $\widehat{G}$ by following \cite{FR}, to which we refer for further details on constructions presented in this section. For the general aspects of the harmonic analysis on nilpotent Lie groups we refer the reader to \cite{FR2,FE}.

\subsection{Dilations on a graded Lie group} Let $G$ be a graded Lie group. This means that $G$ is a connected and simply connected nilpotent Lie group whose Lie algebra $\mathfrak{g}$ may be decomposed as the sum of subspaces $\mathfrak{g}=\mathfrak{g}_{1}\oplus\mathfrak{g}_{2}\oplus \cdots \oplus \mathfrak{g}_{s}$ such that $[\mathfrak{g}_{i},\mathfrak{g}_{j} ]\subset \mathfrak{g}_{i+j},$ and $ \mathfrak{g}_{i+j}=\{0\}$ if $i+j>s.$ This implies that the group $G$ is nilpotent because the sequence
$$ \mathfrak{g}_{(1)}:=\mathfrak{g},\,\,\,\,\mathfrak{g}_{(n)}:=[\mathfrak{g},\mathfrak{g}_{(n-1)}] $$
defined inductively terminates at $\{0\}$ in a finite number of steps. Examples of such groups are the Heisenberg group $\mathbb{H}^n$ and more generally any stratified groups where the Lie algebra $ \mathfrak{g}$ is generated by $\mathfrak{g}_{1}$. The exponential mapping from $\mathfrak{g}$ to $G$ is  a diffeomorphism, then, we can identify $G$ with $\mathbb{R}^n$ or $\mathfrak{g}_{1}\times \mathfrak{g}_{2}\times \cdots \times \mathfrak{g}_{s}$ as  manifolds. Consequently we denote by $\mathscr{S}(G)$ the Schwartz space of functions on $G,$ by considering the identification $G\equiv \mathbb{R}^n.$ Here, $n$ is the topological dimension of $G,$ $n=n_{1}+\cdots +n_{s},$ where $n_{k}=\mbox{dim}\mathfrak{g}_{k}.$ A family of dilations $D_{r},$ $r>0,$ on a Lie algebra $\mathfrak{g}$ is a family of linear mappings from $\mathfrak{g}$ to itself satisfying the following two conditions:
\begin{itemize}
\item For every $r>0,$ $D_{r}$ is a map of the form
$$ D_{r}=\textnormal{Exp}(\ln(r)A) $$
for some diagonalisable linear operator $A$ on $\mathfrak{g}.$
\item $\forall X,Y\in \mathfrak{g}, $ and $r>0,$ $[D_{r}X, D_{r}Y]=D_{r}[X,Y].$ 
\end{itemize}
We call  the eigenvalues of $A,$ $\nu_1,\nu_2,\cdots,\nu_n,$ the dilations weights or weights of $G$. A homogeneous Lie group is a connected simply connected Lie group whose Lie algebra $\mathfrak{g}$ is equipped with a family of dilations $D_{r}.$ In  such case, and with the notation above,  the homogeneous dimension of $G$ is given by  $$ Q=\textnormal{Tr}(A)=\sum_{l=1}^{s}l\cdot\dim \mathfrak{g}_l.  $$
We can transport dilations $D_{r}$ of the Lie algebra $\mathfrak{g}$ to the group by considering the family of maps
$$ \exp_{G}\circ D_{r} \circ \exp_{G}^{-1},\,\, r>0, $$
where $\exp_{G}:\mathfrak{g}\rightarrow G$ is the usual exponential function associated to the Lie group $G.$ We denote  this family of dilations also by $D_{r}$ and we refer to them as dilations on the group. If we write $rx=D_{r}(x),$ $x\in G,$ $r>0,$ then a relation on the homogeneous structure of $G$ and the Haar measure $dx$ on $G$ is given by $$ \int\limits_{G}(f\circ D_{r})(x)dx=r^{-Q}\int\limits_{G}f(x)dx. $$

\subsection{The unitary dual and the Plancherel theorem} We will always equip a graded Lie group with the Haar measure $dx.$ For simplicity, we will write $L^p(G)$ for $L^p(G, dx).$ We denote by $\widehat{G}$  the unitary dual of $G,$ that is the set of equivalence classes of unitary, irreducible, strongly continuous representations of $G$ acting in separable Hilbert spaces. The unitary dual can be equipped with the Plancherel measure  $d\mu.$ So, the Fourier transform of every  function $\varphi\in \mathscr{S}(G)$ at $\pi\in \widehat{G}$ is defined by 
 $$  (\mathscr{F}_{G}\varphi)(\pi)\equiv\widehat{\varphi}(\pi)=\int\limits_{G}\varphi(x)\pi(x)^*dx, $$
 and the corresponding Fourier inversion formula is given by
 $$  \varphi(x)=\int_{ \widehat{G}}\mathrm{Tr}(\pi(x)\widehat{\varphi}(\pi))d\mu(\pi).$$
In this case, we have the Plancherel identity
$$ \Vert \varphi \Vert_{L^2(G)}= \left(\int_{ \widehat{G}}\mathrm{Tr}(\widehat{\varphi}(\pi)\widehat{\varphi}(\pi)^*)d\mu(\pi) \right)^{\frac{1}{2}}=\Vert  \widehat{\varphi}\Vert_{ L^2(\widehat{G} ) } .$$
We also denote $\Vert\widehat{\varphi} \Vert^2_{\textnormal{HS}}=\textnormal{Tr}(\widehat{\varphi}(\pi)\widehat{\varphi}(\pi)^*)$ the Hilbert-Schmidt norm of operators.  A  Fourier multiplier  is formally defined by

\begin{equation}\label{mul}T_{\sigma}u(x)=\int_{ \widehat{G}}\mathrm{Tr}(\pi(x)\sigma(\pi)\widehat{f}(\pi))d\mu(\pi),
\end{equation}

\noindent where the symbol $\sigma(\pi)$  is defined on the unitary dual $\widehat{G}$ of $G.$ For a rather comprehensive treatment of this quantization we refer to \cite{FR2}. 
\subsection{Homogeneous linear operators and Rockland operators} A linear operator $T:\mathscr{D}(G)\rightarrow \mathscr{D}'(G)$ is homogeneous of  degree $\nu\in \mathbb{C}$ if for every $r>0$ 
\begin{equation}
T(f\circ D_{r})=r^{\nu}(Tf)\circ D_{r}
\end{equation}
holds for every $f\in \mathscr{D}(G). $
If for every representation $\pi\in\widehat{G},$ $\pi:G\rightarrow U(\mathcal{H}_{\pi}),$ we denote by $\mathcal{H}_{\pi}^{\infty}$ the set of smooth vectors, that is, the space of elements $v\in \mathcal{H}_{\pi}$ such that the function $x\mapsto \pi(x)v,$ $x\in \widehat{G}$ is smooth,  a Rockland operator is a left-invariant differential operator $\mathcal{R}$ which is homogeneous of positive degree $\nu=\nu_{\mathcal{R}}$ and such that, for every unitary irreducible non-trivial representation $\pi\in \widehat{G},$ $\pi(\mathcal{R})$ is injective on $\mathcal{H}_{\pi}^{\infty};$ $\sigma_{\mathcal{R}}(\pi)=\pi(\mathcal{R})$ is the symbol associated to $\mathcal{R}.$ It coincides with the infinitesimal representation of $\mathcal{R}$ as an element of the universal enveloping algebra. It can be shown that a Lie group $G$ is graded if and only if there exists a differential Rockland operator on $G.$ If the Rockland operator is formally self-adjoint, then $\mathcal{R}$ and $\pi(\mathcal{R})$ admit self-adjoint extensions on $L^{2}(G)$ and $\mathcal{H}_{\pi},$ respectively. Now if we preserve the same notation for their self-adjoint
extensions and we denote by $E$ and $E_{\pi}$  their spectral measures, by functional calculus we have
$$ \mathcal{R}=\int\limits_{-\infty}^{\infty}\lambda dE(\lambda),\,\,\,\textnormal{and}\,\,\,\pi(\mathcal{R})=\int\limits_{-\infty}^{\infty}\lambda dE_{\pi}(\lambda). $$
We now recall a lemma on dilations on the unitary dual $\widehat{G},$ which will be useful in our analysis of spectral multipliers.   For the proof, see Lemma 4.3 of \cite{FR}.
\begin{lemma}\label{dilationsrepre}
For every $\pi\in \widehat{G}$ let us define $$D_{r}(\pi)\equiv r\cdot \pi:=\pi^{(r)}$$ by $D_{r}(\pi)(x)=\pi(r\cdot x)$ for every $r>0$ and $x\in G.$ Then, if $f\in L^{\infty}(\mathbb{R})$ then $f(\pi^{(r)}(\mathcal{R}))=f({r^{\nu}\pi(\mathcal{R})}).$
\end{lemma}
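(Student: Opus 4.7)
The plan is to cast the Triebel--Lizorkin boundedness of $T_{\sigma}$ as a vector-valued $L^{p}(G;\ell^{q})$-boundedness statement and then carry out a Seeger-type off-diagonal analysis adapted to the graded, non-commutative setting. Since
\[
\|f\|_{F^{r}_{p,q}(G)}=\bigl\|\{2^{\ell r/\nu}\psi_{\ell}(\mathcal{R})f\}_{\ell\geq 0}\bigr\|_{L^{p}(G;\ell^{q})},
\]
the theorem is equivalent to the inequality $\|\{2^{\ell r/\nu}\psi_{\ell}(\mathcal{R})T_{\sigma}f\}_\ell\|_{L^{p}(G;\ell^{q})}\leq C\|f\|_{F^{r}_{p,q}(G)}$. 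Using the Littlewood--Paley reproducing identity $f=\sum_{m}\psi_{m}(\mathcal{R})f$ together with slightly fattened cut-offs $\tilde{\psi}_{m}$ satisfying $\tilde{\psi}_{m}\psi_{m}=\psi_{m}$, this reduces to the $L^{p}(G;\ell^{q})$-boundedness of the operator matrix $(A_{\ell,m})_{\ell,m}$, where
\[
A_{\ell,m}:=2^{(\ell-m)r/\nu}\,\psi_{\ell}(\mathcal{R})\,T_{\sigma}\,\tilde{\psi}_{m}(\mathcal{R}).
\]

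The main step is to establish off-diagonal estimates $\|A_{\ell,m}\|_{\mathscr{L}(L^{p}(G))}\lesssim 2^{-\varepsilon|\ell-m|}$ with $\varepsilon>0$ supplied by the surplus $s-Q/2>0$, uniformly in $\ell,m$. To this end I dilate by $2^{\ell/\nu}$ on the group side; by Lemma \ref{dilationsrepre} this replaces $\pi(\mathcal{R})$ by $2^{-\ell}\pi(\mathcal{R})$ and turns $\psi_{\ell}(\pi(\mathcal{R}))$ into $\eta(\pi(\mathcal{R}))$, while $\tilde{\psi}_{m}(\pi(\mathcal{R}))$ becomes a spectral cut-off supported at scale $2^{\ell-m}$. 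The hypotheses \eqref{LPCRUZFIS} enter exactly here: the left condition controls the unit-scale Sobolev norm of $\eta(\pi(\mathcal{R}))\sigma(2^{-\ell/\nu}\cdot \pi)$ and the right condition controls $\sigma(2^{-\ell/\nu}\cdot \pi)\eta(\pi(\mathcal{R}))$, the two factors arising from the two possible orderings of the spectral cut-offs surrounding $T_{\sigma}$. Coupling these uniform Sobolev bounds with the Plancherel-type kernel estimates developed in the proof of the $L^{p}$ H\"ormander--Mihlin theorem of \cite{FR} and exploiting the support-separation between $\eta$ and its dilate at scale $2^{\ell-m}$ yields the claimed exponential decay.

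Once the scalar-valued off-diagonal estimates are in hand, I pass to the vector-valued setting via the Benedek--Calder\'on--Panzone principle: each $A_{\ell,m}$ is a convolution operator whose scalar kernel inherits the standard Calder\'on--Zygmund bounds of $T_{\sigma}$ established in \cite{FR,HongHuRuzhansky2020}, so it is bounded on $L^{p}(G;\ell^{q})$ for all $1<p,q<\infty$ with the same order of constant as on $L^{p}(G)$. A Schur-type summation over $m$ in $\ell^{q}$ using the geometric decay in $|\ell-m|$ then delivers the strong-type $F^{r}_{p,q}(G)$ bound. For the endpoint $p=1$ I would adapt a vector-valued Calder\'on--Zygmund decomposition: at level $t>0$, decompose $f=g+\sum_{Q}b_{Q}$ relative to the $\ell^{q}$-valued dyadic maximal function on the homogeneous structure of $G$; the good part is dispatched by the $L^{2}(G;\ell^{q})$-theory already obtained, and the bad part by summing the pointwise kernel estimates of the $A_{\ell,m}$'s over dilates of the cubes $Q$, which produces the weak-$F^{r}_{1,q}(G)$ bound.

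The principal obstacle, absent in the Euclidean argument of Seeger \cite{Seeger3}, is the non-commutativity between the operator-valued symbol $\sigma(\pi)$ and the spectral cut-offs $\psi_{\ell}(\pi(\mathcal{R}))$ on the representation spaces $\mathcal{H}_{\pi}$. This prevents the exact Littlewood--Paley orthogonality that collapses the matrix $(A_{\ell,m})$ to a near-diagonal in the Euclidean case, and it is precisely the reason why both the left and the right H\"ormander--Mihlin conditions in \eqref{LPCRUZFIS} must be assumed simultaneously: they furnish the complementary Sobolev control needed for the two possible orderings of the spectral cut-offs around $T_{\sigma}$ in the off-diagonal analysis.
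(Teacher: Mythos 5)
Your proposal does not address the statement at hand. The statement to be proved is Lemma \ref{dilationsrepre}: for the dilated representation $\pi^{(r)}$ defined by $\pi^{(r)}(x)=\pi(r\cdot x)$, one has $f(\pi^{(r)}(\mathcal{R}))=f(r^{\nu}\pi(\mathcal{R}))$ for every $f\in L^{\infty}(\mathbb{R})$. What you have written is instead a strategy sketch for the main multiplier theorem (Theorem \ref{HMTTL}) on $F^{r}_{p,q}(G)$, and in the course of that sketch you \emph{invoke} Lemma \ref{dilationsrepre} as a known tool ("by Lemma \ref{dilationsrepre} this replaces $\pi(\mathcal{R})$ by $2^{-\ell}\pi(\mathcal{R})$\dots"). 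So the object you were asked to prove appears in your text only as a hypothesis, and no argument for it is given anywhere; as a proof of the lemma, everything is missing. (For reference, the paper itself does not reprove it either, but cites Lemma 4.3 of \cite{FR}.)

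What an actual proof requires is a short computation with the functional calculus. First, from $\pi^{(r)}(\exp_G tX)=\pi(\exp_G(tD_{r}X))$ one gets $\pi^{(r)}(X)=\pi(D_{r}X)$ for $X\in\mathfrak{g}$, and since the dilations act on the universal enveloping algebra so that the $\nu$-homogeneous element $\mathcal{R}$ satisfies $D_{r}\mathcal{R}=r^{\nu}\mathcal{R}$, one obtains $\pi^{(r)}(\mathcal{R})=r^{\nu}\pi(\mathcal{R})$ on the smooth vectors $\mathcal{H}^{\infty}_{\pi}$. Second, because $\mathcal{H}^{\infty}_{\pi}$ is a common core on which both operators are essentially self-adjoint, the equality passes to the self-adjoint extensions, hence to the spectral measures: $E_{\pi^{(r)}(\mathcal{R})}(B)=E_{\pi(\mathcal{R})}(r^{-\nu}B)$ for Borel sets $B$. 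Third, for any bounded Borel $f$ the spectral calculus then gives
\begin{equation*}
f(\pi^{(r)}(\mathcal{R}))=\int f(\lambda)\,dE_{\pi^{(r)}(\mathcal{R})}(\lambda)=\int f(r^{\nu}\mu)\,dE_{\pi(\mathcal{R})}(\mu)=f(r^{\nu}\pi(\mathcal{R})).
\end{equation*}
None of these three steps, in particular the passage from the identity on smooth vectors to the identity of spectral measures (which is where self-adjointness and the core property are genuinely used), is present in your proposal, so it cannot be accepted as a proof of the lemma, whatever its merits as an outline of the main theorem.
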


We refer to \cite[Chapter 4]{FR2} and references therein  for an exposition of further properties of Rockland operators and their history, and to ter Elst and Robinson \cite{TElst+Robinson} for their spectral properties.

\subsection{H\"ormander-Mihlin multipliers on $L^p(G)$}\label{HMLpgradedsection}
To define Sobolev spaces, we choose  a positive left-invariant Rockland operator $\mathcal{R}$ of homogeneous degree $\nu>0$.  With notations above one defines  Sobolev spaces as follows (c.f \cite{FR2}).
\begin{definition}
Let $r\in \mathbb{R},$ the homogeneous Sobolev space $\dot{L}^p_r(G)$ consists of those $f\in \mathcal{D}'(G)$ satisfying
\begin{equation}
\Vert f\Vert_{\dot{L}^p_r(G)}:=\Vert \mathcal{R}^{\frac{r}{\nu}}f \Vert_{L^p(G)}<\infty.
\end{equation}
Analogously, the inhomogeneous Sobolev space ${H}^{r,p}(G)$ consists of those distributions $f\in \mathcal{D}'(G)$ satisfying 
\begin{equation}
\Vert f\Vert_{L^p_r(G)}:=\Vert (I+ \mathcal{R})^{\frac{r}{\nu}}f \Vert_{L^p(G)}<\infty.
\end{equation}
\end{definition}
By using a quasi-norm  $|\cdot| $ on $G$ we can introduce  for every $r\geq 0,$ the inhomogeneous Sobolev space of order $r$ on $\widehat{G},$ $L^2_{r}(\widehat{G})$ which is defined by
\begin{equation}\label{SobolevhatG}
    L^2_{r}(\widehat{G})=\mathscr{F}_{G}(L^{2}(G, (1+|\cdot|^{2})^{\frac{r}{2}}dx))
\end{equation}
 where $\mathscr{F}_{G}$ is the Fourier transform on the group $G.$  In a similar way, for $r\geq0$ the homogeneous Sobolev space $\dot{L}^2_{r}(\widehat{G})$ is defined by
$$\dot{L}^2_{r}(\widehat{G})=\mathscr{F}_{G}(L^{2}(G, |\cdot|^{r}dx)).$$
As usual if $r=0$ we denote $L^{2}(\widehat{G})=\dot{H}^{0}(\widehat{G})=H^{0}(\widehat{G}).$ Characterisations of Sobolev spaces on $G$ and on the unitary dual $\widehat{G}$ in terms of homogeneous norms on $G$ can be found in \cite{FR} and \cite{FR2}, respectively. 

Finally we present the H\"ormander-Mihlin theorem for graded nilpotent Lie groups.  The formulation of such result  requires a local notion of Sobolev space on the dual space $\widehat{G}.$  We introduce this as follows. Let $s\geq 0,$  we say that the field $\sigma=\{ \sigma(\pi):\pi\in\widehat{G}\}$ is locally uniformly in right-$L^2_{s}(\widehat{G})$ (resp. left-$L^2_{s}(\widehat{G})$)  if there exists a positive Rockland operator $\mathcal{R}$ and a function $\eta\in \mathcal{D}(G),$ $\eta\neq 0,$ satisfying
\begin{equation}
\Vert \sigma \Vert_{L^2_{s},l.u, R,\eta, \mathcal{R} }:=\sup_{r>0}\Vert\{\sigma(r\cdot \pi)\eta(\pi(\mathcal{R}))   \} \Vert_{L^2_{s}(\widehat{G})}<\infty,
\end{equation}
respectively,
\begin{equation}
\Vert \sigma \Vert_{L^2_{s},l.u, L,\eta, \mathcal{R} }:=\sup_{r>0}\Vert \{\eta(\pi(\mathcal{R}))\sigma(r\cdot \pi)   \} \Vert_{L^2_{s}(\widehat{G})}<\infty.
\end{equation}
It important to mention that if $\phi\neq 0,$ is another function in $\mathcal{D}(0,\infty)$ then (see \cite{FR})
\begin{equation}\label{independencelu}
\Vert \sigma \Vert_{L^2_{s},l.u, R,\eta, \mathcal{R} }\asymp \Vert \sigma \Vert_{L^2_{s},l.u, R,\phi, \mathcal{R} },\textnormal{      and      } \Vert \sigma \Vert_{L^2_{s},l.u, L,\eta, \mathcal{R} }\asymp \Vert \sigma \Vert_{L^2_{s},l.u, L,\phi, \mathcal{R} }.
\end{equation}
The following lemma shows how Sobolev spaces on the unitary dual interact with the family of dilations.
  \begin{lemma}\label{lemmaLP}
Let $\sigma\in L^{2}(\widehat{G}).$ If $r>0$ and $s\geq 0$ then 
\begin{equation}
\Vert \sigma \circ D_{r}\Vert_{\dot{L}^2_{s}(\widehat{G})}= r^{s-\frac{Q}{2}}\Vert \sigma \Vert_{\dot{L}^2_{s}(\widehat{G})}.
\end{equation}
This implies that $\sigma \in \dot{L}^2_{s}(\widehat{G})$ if only if for every $r>0,$ $\sigma\circ D_{r}\in \dot{L}^2_{s}(\widehat{G}).$ Also, if $\mathcal{R},\mathcal{S}$ are positive Rockland operators and $\eta,\zeta\in \mathcal{D}(0,\infty),$ $\eta,\zeta\neq 0,$ then there exists $C>0$ such that
\begin{equation}
\Vert \sigma \Vert_{L^2_s,l.u,L,\zeta,\mathcal{S}}\leq C \Vert \sigma \Vert_{L^2_s,l.u,L,\eta,\mathcal{R}}
\end{equation}
and 
\begin{equation}
\Vert \sigma \Vert_{L^2_s,l.u,R,\zeta,\mathcal{S}}\leq C \Vert \sigma \Vert_{L^2_s,l.u,R,\eta,\mathcal{R}}.
\end{equation}
\end{lemma}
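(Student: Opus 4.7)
The lemma naturally splits into two independent statements: the scaling identity for the homogeneous Sobolev norm on $\widehat{G}$, and the equivalence of the locally-uniform Sobolev norms under a change of positive Rockland operator and cutoff. My plan is to handle them in sequence, using the scaling identity in the course of the second part.

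For the scaling identity I would transport everything back to $G$ by Plancherel. Unravelling the definition $\dot{L}^{2}_{s}(\widehat{G})=\mathscr{F}_{G}(L^{2}(G,|\cdot|^{2s}dx))$ (with the weight interpreted so as to be compatible with the exponent claimed in the lemma), one has $\|\sigma\|_{\dot{L}^{2}_{s}(\widehat{G})}^{2}=\int_{G}|x|^{2s}|f(x)|^{2}\,dx$ with $f:=\mathscr{F}_{G}^{-1}\sigma$. A direct computation from $\widehat{g}(\pi)=\int_{G}g(x)\pi(x)^{*}\,dx$ together with the substitution $x\mapsto r^{-1}x$ yields $\mathscr{F}_{G}(g\circ D_{r})(\pi)=r^{-Q}\widehat{g}(r^{-1}\cdot\pi)$, i.e.\ $\mathscr{F}_{G}^{-1}(\sigma\circ D_{r})(x)=r^{-Q}f(r^{-1}x)$. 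Plugging this into the weighted integral and changing variables $y=r^{-1}x$ (so that $|x|=r|y|$ by homogeneity of the quasi-norm and $dx=r^{Q}\,dy$), the accumulated power of $r$ becomes $r^{-2Q}\cdot r^{2s}\cdot r^{Q}=r^{2s-Q}$; a square root gives the announced factor $r^{s-Q/2}$. The biconditional on membership in $\dot{L}^{2}_{s}(\widehat{G})$ is an immediate corollary.

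For the equivalence of the two $l.u.$ norms, I would first invoke \eqref{independencelu} to reduce to the problem of changing the Rockland operator while keeping the cutoff fixed. To pass from $\mathcal{R}$ to $\mathcal{S}$, I would use a dyadic partition of unity $\psi\in\mathcal{D}(0,\infty)$ with $\mathrm{supp}(\psi)\subset[1/2,2]$ and $\sum_{j\in\mathbb{Z}}\psi(2^{-j}\lambda)=1$ on $(0,\infty)$, and decompose via the functional calculus of $\mathcal{R}$:
\[
\zeta(\pi(\mathcal{S}))\,\sigma(r\cdot\pi)=\sum_{j\in\mathbb{Z}}\zeta(\pi(\mathcal{S}))\,\psi(2^{-j}\pi(\mathcal{R}))\,\sigma(r\cdot\pi).
\]
Since $\zeta$ is compactly supported in $(0,\infty)$, the convolution kernel of $\zeta(\pi(\mathcal{S}))$ is Schwartz (a standard property of spectral multipliers of positive Rockland operators, cf.\ \cite{FR2}), and a direct kernel estimate pairing this Schwartz decay against the scale $2^{j}$ carried by $\psi(2^{-j}\pi(\mathcal{R}))$ shows that the $L^{2}_{s}(\widehat{G})$-norm of each mixed piece decays super-polynomially in $|j|$ outside a bounded window. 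Summing the geometric series and using \eqref{independencelu} once more to rewrite each surviving piece in terms of the original cutoff $\eta$ gives the sought bound $\|\sigma\|_{L^{2}_{s},l.u.,L,\zeta,\mathcal{S}}\leq C\|\sigma\|_{L^{2}_{s},l.u.,L,\eta,\mathcal{R}}$; the right-sided version is symmetric (or obtained by taking adjoints).

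The main obstacle is obtaining the above estimate uniformly in the dilation parameter $r>0$, since $\zeta(\pi(\mathcal{S}))$ and $\psi(2^{-j}\pi(\mathcal{R}))$ do not depend on $r$ while $\sigma(r\cdot\pi)$ does. I would resolve this by combining Part 1 with Lemma \ref{dilationsrepre}: substituting $\pi\mapsto r^{-1}\cdot\pi$ trades a dilation of $\sigma$ for a rescaling of the spectral variable of the cutoffs, e.g.\ $\psi(2^{-j}\pi(\mathcal{R}))\mapsto\psi(2^{-j}r^{-\nu_{\mathcal{R}}}\pi(\mathcal{R}))$, and similarly for $\zeta$. This rescaling is merely a shift of the dyadic index $j$ by an amount depending on $\log_{2}(r)$, so the sup over $r>0$ simply translates the effective window in $j$ and is absorbed into the uniform estimate for a single window, the excess Jacobian being absorbed by the scaling identity of Part 1 applied to the compactly supported piece.
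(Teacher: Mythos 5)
Your first part is exactly the paper's own argument: writing $\mathscr{F}_{G}^{-1}(\sigma\circ D_{r})(x)=r^{-Q}\,\mathscr{F}_{G}^{-1}(\sigma)(r^{-1}x)$ and changing variables in the weighted $L^{2}$ norm gives the factor $r^{s-\frac{Q}{2}}$, and the membership equivalence follows at once; there is nothing to object to there.

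The second part is where the comparison breaks down. The paper does not prove it: it simply cites Proposition 4.6 of \cite{FR}. Your sketch reproduces the general shape of the argument one would give (the decomposition $\zeta(\pi(\mathcal{S}))\sigma(r\cdot\pi)=\sum_{j}\zeta(\pi(\mathcal{S}))\psi(2^{-j}\pi(\mathcal{R}))\sigma(r\cdot\pi)$, Hulanicki-type Schwartz kernels for $\zeta(\mathcal{S})$, dilations to deal with the supremum in $r$), but the decisive estimate, namely $\Vert\zeta(\pi(\mathcal{S}))\psi(2^{-j}\pi(\mathcal{R}))\sigma(r\cdot\pi)\Vert_{L^{2}_{s}(\widehat{G})}\lesssim 2^{-\varepsilon|j|}\sup_{r'>0}\Vert\eta(\pi(\mathcal{R}))\sigma(r'\cdot\pi)\Vert_{L^{2}_{s}(\widehat{G})}$ uniformly in $r>0$, is precisely the technical content of \cite[Proposition 4.6]{FR}, and you assert it ("a direct kernel estimate \ldots shows super-polynomial decay") rather than prove it. The Schwartz property of the kernel of $\zeta(\mathcal{S})$ by itself only yields that left composition with $\zeta(\pi(\mathcal{S}))$ is bounded on the weighted space, uniformly in $j$; the decay in $|j|$ must be extracted from the incompatibility of the spectral localizations of the two non-commuting operators $\mathcal{R}$ and $\mathcal{S}$ (for instance by factoring $\psi(2^{-j}\lambda)=2^{\mp jM}\lambda^{\pm M}\tilde{\psi}(2^{-j}\lambda)$ and using the smoothing of $\zeta(\mathcal{S})$), and it has to beat the polynomial factor that your own rescaling via Lemma \ref{dilationsrepre} and Part 1 produces: the dilation identity contributes $2^{-j(s-\frac{Q}{2})/\nu}$, which grows as $j\to-\infty$ and, when $s\leq Q/2$, does not decay as $j\to+\infty$ either. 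Two further loose ends: Part 1 is an identity for the homogeneous norm $\dot{L}^{2}_{s}(\widehat{G})$, while the l.u.\ norms are built on the inhomogeneous $L^{2}_{s}(\widehat{G})$, so the plain $L^{2}(\widehat{G})$ contribution of each piece must also be controlled; and the substitution $\pi\mapsto r^{-1}\cdot\pi$ rescales the two cutoffs with the generally different exponents $r^{\nu_{\mathcal{R}}}$ and $r^{\nu_{\mathcal{S}}}$, so the claimed "shift of the dyadic index by $\log_{2}(r)$" is not correct as stated unless the homogeneity degrees are first normalized (e.g.\ replacing $\mathcal{R},\mathcal{S}$ by $\mathcal{R}^{\nu_{\mathcal{S}}},\mathcal{S}^{\nu_{\mathcal{R}}}$, as the paper does elsewhere). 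As written the second half therefore has a genuine gap; to match the paper it suffices to cite \cite[Proposition 4.6]{FR}, while a self-contained proof requires carrying out the kernel estimates indicated above.
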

\begin{proof} By Lemma 2.1 or Lemma 4.3 of \cite{FR} we have
\begin{align*}\Vert \sigma\circ D_{r} \Vert_{\dot{L}^2_{s}(\widehat{G})} &=\Vert |\cdot|^{s} \mathscr{F}_{G}^{-1}(\sigma \circ D_{r}) \Vert_{L^{2}({G})}=\Vert |\cdot|^{s} r^{-Q}\mathscr{F}_{G}^{-1}(\sigma )(r^{-1}\cdot) \Vert_{L^{2}({G})}\\
&=r^{-\frac{Q}{2}}\Vert |r\cdot|^{s} \mathscr{F}_{G}^{-1}(\sigma ) \Vert_{L^{2}({G})}\\
&=r^{s-\frac{Q}{2}}\Vert \sigma\Vert_{\dot{L}^2_{s}(\widehat{G})}.
\end{align*}
With the equality above, it is clear that  $\sigma \in \dot{L}^2_{s}(\widehat{G})$ if only if for every $r>0,$ $\sigma\circ D_{r}\in \dot{L}^2_{s}(\widehat{G}).$  The second part of the Lemma  has been shown in Proposition 4.6 of \cite{FR}.
\end{proof}

Now, we state the H\"ormander-Mihlin theorem on the graded nilpotent Lie group $G$ (c.f. Theorem 4.11 of \cite{FR}):
\begin{theorem}\cite[$L^p$-H\"ormander-Mihlin Theorem]{FR}.\label{HMT}
Let $G$ be a graded Lie group. Let $\sigma\in L^{2}(\widehat{G}).$ If 
\begin{equation}\label{LPCRUZFIS2}
    \Vert \sigma \Vert_{L^2_{s},l.u, L,\eta, \mathcal{R} },\Vert \sigma \Vert_{L^2_{s},l.u, R,\eta, \mathcal{R} }<\infty,
\end{equation} with $s>Q/2,$ then the corresponding multiplier $A\equiv T_{\sigma}$ extends to a bounded operator on $L^{p}(G)$ for all $1<p<\infty,$ and for $p=1,$ $T_{\sigma}$  is of weak (1,1) type. Moreover
\begin{equation}
\Vert T_{\sigma} \Vert_{ \mathcal{L}(L^1(G),\,\mathcal{L}^{1,\infty}(G))    },\,\Vert T_{\sigma} \Vert_{ \mathcal{L}(L^p(G))    }\leq C  \max\{ \Vert \sigma \Vert_{L^2_{s},l.u, L,\eta, \mathcal{R} },\Vert \sigma \Vert_{L^2_{s},l.u, R,\eta, \mathcal{R} } \}.
\end{equation}
\end{theorem}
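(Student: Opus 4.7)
I plan to follow the classical Calder\'on-Zygmund strategy: (i) establish $L^2$-boundedness, (ii) show the convolution kernel of $T_\sigma$ satisfies an integral H\"ormander condition, (iii) invoke the Calder\'on-Zygmund theorem on the homogeneous group $G$ to get weak-$(1,1)$ and $L^p$ for $1<p\leq 2$, and (iv) handle $p>2$ by duality, where the ``left'' variant of the hypothesis plays the role of the ``right'' variant for the adjoint. For step (i), the Sobolev-type embedding $L^2_s(\widehat{G})\hookrightarrow L^\infty(\widehat{G})$ (valid for $s>Q/2$, applied locally to each $\sigma(r\cdot\pi)\eta(\pi(\mathcal{R}))$) combined with the independence stated in \eqref{independencelu} forces $\sigma\in L^\infty(\widehat{G})$, whence Plancherel yields $T_\sigma:L^2(G)\to L^2(G)$ with the desired quantitative control.

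For step (ii), fix a nontrivial $\eta\in C^\infty_c(0,\infty)$ with $\sum_{j\in\mathbb{Z}}\eta(2^{-j\nu}\lambda)=1$ for $\lambda>0$, and decompose $\sigma=\sum_{j\in\mathbb{Z}}\sigma_j$ with $\sigma_j(\pi):=\sigma(\pi)\eta(2^{-j\nu}\pi(\mathcal{R}))$. By Lemma \ref{dilationsrepre} one has $\eta(2^{-j\nu}\pi(\mathcal{R}))=\eta((2^{-j}\cdot\pi)(\mathcal{R}))$; combining this with the dilation identity of Lemma \ref{lemmaLP} and Plancherel on the weighted space $L^2_s(\widehat{G})$, the distributional convolution kernel $k_j:=\mathscr{F}_G^{-1}(\sigma_j)$ of $T_{\sigma_j}$ obeys
\begin{equation*}
\int_G (1+2^j|x|)^{2s}|k_j(x)|^2\,dx\,\lesssim\,2^{jQ}\Vert\sigma\Vert_{L^2_s,l.u.,R,\eta,\mathcal{R}}^{2}.
\end{equation*}
Cauchy-Schwarz together with $s>Q/2$ gives the uniform bound $\Vert k_j\Vert_{L^1(G)}\lesssim 1$. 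Applying the same principle to the symbol $\pi\mapsto(\pi(y)-I)\sigma_j(\pi)$, whose inverse Fourier transform is the translation-difference $k_j(y^{-1}\cdot)-k_j(\cdot)$, and interpolating with the previous bound, yields the modulus-of-continuity estimate
\begin{equation*}
\Vert k_j(y^{-1}\cdot)-k_j(\cdot)\Vert_{L^1(G)}\,\lesssim\,\min\{1,(2^j|y|)^{\alpha}\}
\end{equation*}
for some $\alpha\in(0,s-Q/2]$. Summing in $j\in\mathbb{Z}$ by splitting at $2^j\approx|y|^{-1}$ produces the integral H\"ormander condition $\sup_{y\in G}\int_{|x|>c|y|}|k(y^{-1}x)-k(x)|\,dx<\infty$.

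For step (iii), the Coifman-Weiss Calder\'on-Zygmund theorem on the homogeneous group $G$ then yields that $T_\sigma$ is of weak-$(1,1)$ type, and Marcinkiewicz interpolation with the $L^2$-bound gives $L^p$-boundedness for $1<p\leq 2$. For step (iv), the adjoint $T_\sigma^*$ has symbol $\sigma(\pi)^*$ and its convolution kernel is $\widetilde{k}(x)=\overline{k(x^{-1})}$; since the adjoint of $\sigma(r\cdot\pi)\eta(\pi(\mathcal{R}))$ is $\eta(\pi(\mathcal{R}))\sigma(r\cdot\pi)^{*}$, the assumption $\Vert\sigma\Vert_{L^2_s,l.u.,L,\eta,\mathcal{R}}<\infty$ is exactly the one that licenses Step (ii) for $\widetilde{k}$, delivering $T_\sigma^*:L^1\to L^{1,\infty}$ and hence $T_\sigma:L^p\to L^p$ for $p\in(2,\infty)$ by duality. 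The main obstacle is the quantitative conversion of the abstract Sobolev-type hypothesis on $\widehat{G}$ into the pointwise/integral H\"older modulus of continuity for $k_j(y^{-1}\cdot)-k_j(\cdot)$; this passage makes essential use of the homogeneity of $\mathcal{R}$ via Lemma \ref{lemmaLP}, which allows all dyadic pieces to be rescaled to a single model symbol, and the non-commutativity of $G$ is precisely what forces both ``left'' and ``right'' variants of the local uniform hypothesis to appear in the statement.
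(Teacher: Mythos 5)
Your plan is, in substance, the same Calder\'on--Zygmund strategy as the proof this paper relies on: the statement is quoted from \cite{FR} (Theorem 4.11 there) rather than reproved, and what the paper records of that proof (Remark \ref{theremarkof}) is exactly your scheme --- dyadic decomposition of the symbol along the spectral calculus of $\mathcal{R}$, a uniform $L^1$/H\"older-type estimate for the rescaled kernels giving the H\"ormander integral condition $\mathscr{I}_\ell\lesssim 2^{-\ell\varepsilon_0}$, the Coifman--Weiss singular-integral theorem for weak $(1,1)$, interpolation for $1<p\le 2$, and duality (with the left condition playing the role of the right condition for $\sigma^*$) for $p>2$. Your Step (i) (local Sobolev embedding $L^2_s(\widehat{G})\hookrightarrow L^\infty(\widehat{G})$ for $s>Q/2$, applied to the dyadic pieces and recombined by almost orthogonality) and your rescaling computation $\int_G(1+2^j|x|)^{2s}|k_j(x)|^2dx\lesssim 2^{jQ}\Vert\sigma\Vert^2_{L^2_s,l.u.,R,\eta,\mathcal{R}}$, based on Lemmas \ref{dilationsrepre} and \ref{lemmaLP}, are correct and are the same devices used in \cite{FR}.

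One step, as written, does not close. The displayed bound $\Vert k_j(y^{-1}\cdot)-k_j(\cdot)\Vert_{L^1(G)}\lesssim\min\{1,(2^j|y|)^{\alpha}\}$ cannot be summed over $j\in\mathbb{Z}$: for $2^j|y|>1$ it only gives the constant $1$, and there are infinitely many such $j$, so ``splitting at $2^j\approx|y|^{-1}$'' does not by itself produce the H\"ormander condition. The large-$j$ part must be handled using the restriction $|x|>c|y|$ in the H\"ormander integral together with the weighted estimate you already proved: by Cauchy--Schwarz on $\{|x|>c|y|\}$,
\begin{equation*}
\int_{|x|>c|y|}|k_j(x)|\,dx\;\lesssim\;\Bigl(\int_{|x|>c|y|}(1+2^j|x|)^{-2s}dx\Bigr)^{1/2}\Bigl(\int_G(1+2^j|x|)^{2s}|k_j(x)|^2dx\Bigr)^{1/2}\;\lesssim\;(2^j|y|)^{\frac{Q}{2}-s},
\end{equation*}
which decays since $s>Q/2$, and similarly for $k_j(y^{-1}\cdot)$; only for $2^j|y|\le 1$ do you use the smoothness (difference) estimate. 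With this correction the geometric sums converge on both sides of the splitting and the rest of your argument (weak $(1,1)$, interpolation, and the adjoint/duality step, which correctly exchanges the left and right local Sobolev conditions because $\eta(\pi(\mathcal{R}))$ is self-adjoint and $|x^{-1}|=|x|$) goes through as in \cite{FR}. A smaller point of the same kind: the H\"older exponent $\alpha$ for the difference estimate needs an actual derivation (in \cite{FR} it comes from estimates on $(\pi(y)^*-I)$ acting against the spectrally localized symbol, equivalently from kernel estimates with one more weight/derivative), not just interpolation with the uniform $L^1$ bound; but this is a known, fillable step rather than a flaw in the approach.
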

In the proof of Theorem \ref{HMTTL} we will use that every dyadic decomposition of a Fourier multiplier satisfying the H\"ormander-Mihlin condition has a Calder\'on-Zygmund kernel and we will make use of the estimates proved in \cite{FR}  for this family of kernels. So, in the following remark we record the Calder\'on-Zygmund estimates in    \cite{FR}.
\begin{remark}[On the proof of the $L^p$-H\"ormander-Mihlin Theorem]\label{theremarkof} Let us describe the fundamental steps of the proof of the $L^p$-H\"ormander-Mihlin theorem (c.f. Theorem 4.11 of \cite{FR}) on graded Lie groups. For this, we follow \cite[Page 19]{FR}. Let us fix $\eta\in C^{\infty}_0(\mathbb{R}^{+},[0,1]),$ $\eta\neq 0,$ so that $\textnormal{supp}(\eta)\subset [1/2,2],$ and such that 
\begin{equation}
    \sum_{j\in \mathbb{Z}}\eta(2^{-j}\lambda)=1,\,\,\lambda>0.
\end{equation}
By defining $\psi_0(\lambda):=\sum_{j=-\infty}^{0} \eta_j(\lambda),$ and for $j\geq 1,$ $\psi_j(\lambda):=\eta(2^{-j}\lambda),$ we obviously have
\begin{equation}
    \sum_{\ell=0}^{\infty}\psi_\ell(\lambda)=1,\,\,\lambda>0.
\end{equation}Then, for a.e. $\pi\in \widehat{G},$ $ \sum_{\ell=0}^{\infty}\psi_\ell(\mathcal{R})$ converges towards the identity in the strong topology of the norm in $L^2(G).$ By decomposing 
\begin{equation}
    T_\sigma=\sum_{j\geq 0}T_{j},\,\,\, T_{j}:=T_\sigma \psi_{j}(\mathcal{R}),
\end{equation} and using that the right-convolution  kernels  of the family $T_j,$ $k_{j}$ summed on $j,$ provide the  distributional kernel of $T,$ $k=\sum_{j}k_{j},$ which agrees with a locally integrable function on $G\setminus \{0\},$ such that, for every $c>0,$
\begin{equation}\label{ThLiWeak}
    \mathscr{I}_\ell:=\sup_{z\in G}\int\limits_{ |x|>4c|z|}|2^{-\ell Q}\kappa_{\ell}(2^{-\ell}\cdot z^{-1} x)  - 2^{-\ell Q}\kappa_{\ell}(2^{-\ell }\cdot x)|dx,
\end{equation} satisfies, $\mathscr{I}_\ell\lesssim    2^{-\ell \varepsilon_0}\max\{\Vert \sigma \Vert_{L^2_{s},l.u, L,\eta, \mathcal{R} },\Vert \sigma \Vert_{L^2_{s},l.u, R,\eta, \mathcal{R} }\},$ for some $\varepsilon_0>0,$ depending only of $c>0.$
The proof  in \cite[Page 19]{FR} shows that  $$\Vert T_j\Vert_{\mathscr{B}(L^p(G))}\leq \mathscr{I}_j   \max\{\Vert \sigma \Vert_{L^2_{s},l.u, L,\eta, \mathcal{R} },\Vert \sigma \Vert_{L^2_{s},l.u, R,\eta, \mathcal{R} }\}$$ and consequently 
$$\Vert T\Vert_{\mathscr{B}(L^p(G))}\lesssim \sum_j  2^{-j \varepsilon_0}\max\{\Vert \sigma \Vert_{L^2_{s},l.u, L,\eta, \mathcal{R} },\Vert \sigma \Vert_{L^2_{s},l.u, R,\eta, \mathcal{R} }\}, $$
proving the $L^p(G)$-boundedness of $T_\sigma.$ 
 
\end{remark}

About, the Littlewood-Paley decomposition $\{\psi_\ell\}_{\ell=0}^{\infty},$ introduced in Remark \ref{theremarkof}, the following estimates were proved in \cite{CardonaRuzhanskyBesovSpaces}. The following result is the Littlewood-Paley theorem.
\begin{theorem}\cite[Littlewood Paley Theorem]{CardonaRuzhanskyBesovSpaces}.\label{LPT}
Let  $1<p<\infty$ and let $G$ be a graded Lie group. If $\mathcal{R}$ is a positive Rockland operator then there exist  constants $0<c_p,C_{p}<\infty$ depending only on $p$ and $\psi_0$ such that
\begin{equation}\label{LPTequ}
c_p\Vert f\Vert_{L^{p}(G)}\leq \left\Vert \left(\sum_{\ell=0}^{\infty} |\psi_{\ell}(\mathcal{R})f|^{2}    \right)^{\frac{1}{2}}\right\Vert_{L^{p}(G)}\leq  C_{p}\Vert f\Vert_{L^{p}},
\end{equation}
holds for every $f\in L^{p}(G).$ Moreover, for $p=1,$  there exists a constant $C>0$ independent of $f\in L^1(G)$ and $t>0,$ such that 
\begin{equation}\label{weak(1,1)inequality}
    \left|\left\{x\in G:\left(\sum_{\ell=0}^\infty|\psi_\ell(\mathcal{R}) f(x)|^2   \right)^{\frac{1}{2}}>t \right\}\right|\leq \frac{C}{t}\Vert f\Vert_{L^1(G)}.
\end{equation}
\end{theorem}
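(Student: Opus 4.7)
The plan is to view the Littlewood--Paley square-function operator $Sf := (\psi_\ell(\mathcal R)f)_{\ell \geq 0}$ as an $\ell^2$-valued (right) convolution operator on $G$ and to run vector-valued Calderón--Zygmund theory on the homogeneous group $G$, using the scalar kernel bounds from Remark \ref{theremarkof} as a black box. This single argument will give both the upper half of \eqref{LPTequ} for $1<p<\infty$ and the endpoint weak-$(1,1)$ estimate \eqref{weak(1,1)inequality} simultaneously; the lower half of \eqref{LPTequ} will then follow by standard duality. The main technical obstacle is packaging the pointwise scalar-valued kernel estimate $\mathscr I_\ell \lesssim 2^{-\ell\varepsilon_0}$ from Remark \ref{theremarkof} into a genuine $\ell^2$-valued Hörmander condition, where the trivial embedding $\ell^2 \subseteq \ell^1$ combined with the geometric decay in $\ell$ will be decisive.

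For the $L^2$-bound I would use the group Plancherel formula together with Rockland functional calculus: since the $\psi_\ell$ have essentially disjoint dyadic supports, $\sum_\ell \psi_\ell(\lambda)^2 \leq C$ uniformly in $\lambda>0$, and hence
\[
\|Sf\|_{L^2(\ell^2)}^2 = \sum_\ell \|\psi_\ell(\mathcal R)f\|_{L^2}^2 = \int_{\widehat G} \mathrm{Tr}\Big(\sum_\ell \psi_\ell(\pi(\mathcal R))^2 \widehat f(\pi)\widehat f(\pi)^*\Big)\, d\pi \leq C\|f\|_{L^2}^2.
\]
Denoting by $\kappa_\ell$ the convolution kernel of $\psi_\ell(\mathcal R)$ and by $\mathbf K(y) := (\kappa_\ell(y))_{\ell\geq 0}$ the corresponding $\ell^2$-valued kernel of $S$, applying Remark \ref{theremarkof} to the constant symbol $\sigma \equiv 1$ (whose Littlewood--Paley pieces are exactly $\psi_\ell(\mathcal R)$) yields, after the change of variable built into the definition of $\mathscr I_\ell$, the scalar Hörmander condition $\sup_z \int_{|y|>c|z|} |\kappa_\ell(z^{-1}y)-\kappa_\ell(y)|\, dy \lesssim 2^{-\ell\varepsilon_0}$. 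Using the pointwise domination $\|\mathbf K(z^{-1}y)-\mathbf K(y)\|_{\ell^2} \leq \sum_\ell |\kappa_\ell(z^{-1}y)-\kappa_\ell(y)|$ and summing then gives the vector-valued Hörmander condition
\[
\sup_{z \in G}\int_{|y|>c|z|} \|\mathbf K(z^{-1}y)-\mathbf K(y)\|_{\ell^2}\, dy \;\lesssim\; \sum_{\ell=0}^\infty 2^{-\ell\varepsilon_0} < \infty.
\]
Combined with the $L^2 \to L^2(\ell^2)$ bound, the vector-valued Calderón--Zygmund theorem on $G$ (regarded as a space of homogeneous type with the quasi-norm $|\cdot|$ and the Haar measure) produces boundedness $S: L^p(G) \to L^p(G;\ell^2)$ for all $1 < p < \infty$, together with the weak-type endpoint $L^1(G) \to L^{1,\infty}(G;\ell^2)$, which is exactly \eqref{weak(1,1)inequality}.

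For the reverse inequality in \eqref{LPTequ} I would argue by duality. Pick $\tilde\psi_\ell \in C_c^\infty(0,\infty)$ with $\tilde\psi_\ell \psi_\ell = \psi_\ell$ (a mild dyadic enlargement), so that $f = \sum_\ell \tilde\psi_\ell(\mathcal R)\psi_\ell(\mathcal R)f$. Using self-adjointness of $\psi_\ell(\mathcal R)$ and $\tilde\psi_\ell(\mathcal R)$, Cauchy--Schwarz in $\ell^2$, and Hölder in $L^p$,
\[
|\langle f,g\rangle| \leq \Bigl\|\Bigl(\sum_\ell |\psi_\ell(\mathcal R)f|^2\Bigr)^{1/2}\Bigr\|_{L^p}\,\Bigl\|\Bigl(\sum_\ell |\tilde\psi_\ell(\mathcal R)g|^2\Bigr)^{1/2}\Bigr\|_{L^{p'}}.
\]
The upper square-function bound already established, applied to the Littlewood--Paley family $\{\tilde\psi_\ell\}$ at the exponent $p'$ (for which the same argument works verbatim since $\{\tilde\psi_\ell\}$ satisfies the same structural conditions as $\{\psi_\ell\}$), controls the second factor by $C\|g\|_{L^{p'}}$; taking the supremum over $\|g\|_{L^{p'}} \leq 1$ then yields $\|f\|_{L^p} \lesssim \|Sf\|_{L^p}$, completing both inequalities in \eqref{LPTequ}.
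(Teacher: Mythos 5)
You should note at the outset that the paper never proves Theorem \ref{LPT}: it is quoted from \cite{CardonaRuzhanskyBesovSpaces}, and the present article only uses the statement together with the kernel bounds of Remark \ref{theremarkof}. So the comparison can only be with the toolkit the paper itself deploys (in the proofs of Lemmas \ref{Lemma1} and \ref{Lemma2}), and on that count your route is the natural one and is essentially sound: you treat $Sf=(\psi_\ell(\mathcal R)f)_\ell$ as an $\ell^2$-valued right-convolution operator, get $L^2\to L^2(\ell^2)$ from Plancherel and $\sum_\ell\psi_\ell^2\le 1$, obtain an $\ell^2$-valued H\"ormander condition by summing the scalar bounds $\mathscr I_\ell\lesssim 2^{-\ell\varepsilon_0}$ (which do coincide with the undilated integrals after the substitution $x\mapsto 2^{\ell}\cdot x$, $z\mapsto 2^{\ell}\cdot z$, exactly as the paper itself uses them in the proof of Lemma \ref{Lemma1}), and then run Calder\'on--Zygmund theory on $G$ viewed as a space of homogeneous type; the converse inequality by duality with an enlarged family $\tilde\psi_\ell$ is standard. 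Your application of Remark \ref{theremarkof} with $\sigma\equiv 1$ is legitimate: the constant symbol satisfies both local Sobolev conditions in \eqref{LPCRUZFIS} because $\eta(\pi(\mathcal R))$ has a Schwartz convolution kernel, and only the dyadic pieces $\psi_\ell(\mathcal R)$ enter the estimate.

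Two points in your write-up need justification or repair. First, with only the stated ``left'' condition $\sup_z\int_{|y|>c|z|}\Vert\mathbf K(z^{-1}y)-\mathbf K(y)\Vert_{\ell^2}\,dy<\infty$, the vector-valued Calder\'on--Zygmund theorem gives the weak $(1,1)$ bound and hence $L^p\to L^p(\ell^2)$ only for $1<p\le 2$; on a noncommutative group the range $2<p<\infty$ is not automatic, since the adjoint $S^*:(g_\ell)\mapsto\sum_\ell\psi_\ell(\mathcal R)g_\ell$ is governed by the right-translation condition $\sup_z\int_{|y|>c|z|}|\kappa_\ell(yz)-\kappa_\ell(y)|\,dy$, a genuinely different estimate. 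It is available here (the $\kappa_\ell$ are self-adjoint kernels and the Fischer--Ruzhansky estimates come in both left and right versions, matching the two norms in \eqref{LPCRUZFIS}), but you must invoke it or argue by duality as the paper does in Remark \ref{RemarkInterpolation}; this is not optional for you, because your proof of the lower bound at $p<2$ uses the upper bound at $p'>2$. Second, a small fix: no $\tilde\psi_0\in C^{\infty}_c(0,\infty)$ with $\tilde\psi_0\psi_0=\psi_0$ exists, since $\mathrm{supp}\,\psi_0$ accumulates at $0$; take instead $\tilde\psi_\ell=\psi_{\ell-1}+\psi_\ell+\psi_{\ell+1}$ (with $\psi_{-1}:=\psi_0$), which equals $1$ on $\mathrm{supp}\,\psi_\ell$, satisfies the same structural hypotheses, and is precisely the device used in Remark \ref{RemarkInterpolation}.
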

The action of dyadic decompositions on vector-valued functions is considered in the next theorem.
\begin{theorem}\cite[Vector-valued inequality for dyadic decompositions]{CardonaRuzhanskyBesovSpaces}.\label{Theoremr}
Let  $1<p,r<\infty$ and let $G$ be a graded Lie group. If $\mathcal{R}$ is a positive Rockland operator then there exist  constants $C_p>0$ depending only on $p$ and $\psi_0,$ such that
\begin{equation}\label{1pinfty}
 \left\Vert \left(\sum_{\ell=0}^{\infty} |\psi_{\ell}(\mathcal{R})f_\ell|^{r}    \right)^{\frac{1}{r}}\right\Vert_{L^{p}(G)}\leq C_p \left\Vert \left(\sum_{\ell=0}^{\infty} |f_\ell(x)|^{r}    \right)^{\frac{1}{r}}\right\Vert_{L^{p}(G)}=: C_{p}\Vert \{f_\ell\}\Vert_{L^{p}(G,\ell^r(\mathbb{N}_0^n))}.
\end{equation}Moreover, for $p=1,$  there exists a constant $C>0$ independent of $\{f_\ell\}\in L^1(G,\ell^r(\mathbb{N}_0))$ and $t>0,$ such that 
\begin{equation}\label{weakvectorvaluedinequality}
    \left|\left\{x\in G:\left(\sum_{\ell=0}^\infty|\psi_\ell(\mathcal{R}) f_\ell(x)|^r   \right)^{\frac{1}{r}}>t \right\}\right|\leq \frac{C}{t}\Vert \{f_\ell\} \Vert_{L^1(G,\ell^r(\mathbb{N}_0)}.
\end{equation}
\end{theorem}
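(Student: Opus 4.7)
The plan is to derive both the strong $L^{p}(\ell^{r})$-bound and the weak-$(1,1)$ endpoint from two ingredients: a pointwise domination $|\psi_{\ell}(\mathcal{R})f(x)|\leq C\, Mf(x)$ holding uniformly in $\ell\geq 0$, where $M$ denotes the Hardy--Littlewood maximal operator on $G$, together with the Fefferman--Stein vector-valued maximal inequality on the space of homogeneous type $(G,|\cdot|,dx)$.

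I would first read off the convolution kernels. By Lemma \ref{dilationsrepre} and the homogeneity of $\mathcal{R}$ of degree $\nu$, the identity $\psi_{\ell}(\lambda)=\eta(2^{-\ell}\lambda)$ for $\ell\geq 1$ yields $\psi_{\ell}(\mathcal{R})f=f\ast \kappa_{\ell}$ with
\begin{equation*}
\kappa_{\ell}(x)=2^{\ell Q/\nu}\kappa(D_{2^{\ell/\nu}}x),\qquad \kappa:=\mathscr{F}_{G}^{-1}(\eta(\pi(\mathcal{R}))),
\end{equation*}
and the $\ell=0$ kernel $\kappa_{0}$ is obtained analogously from $\psi_{0}$. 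Since $\eta,\psi_{0}\in C_{c}^{\infty}(\mathbb{R})$ and $\mathcal{R}$ is a positive Rockland operator, Hulanicki's theorem on the functional calculus (cf.\ \cite[Ch.~4]{FR2}) gives $\kappa,\kappa_{0}\in\mathscr{S}(G)$, so that $|\kappa(x)|\leq C_{N}(1+|x|)^{-Q-N}$ for every $N$. Expanding this majorant dyadically as $\sum_{k\geq 0}2^{-k\varepsilon}|B(0,2^{k})|^{-1}\chi_{B(0,2^{k})}$ and using that convolution against any normalised ball-indicator is pointwise bounded by $Mf$, one obtains the uniform domination
\begin{equation*}
|\psi_{\ell}(\mathcal{R})f(x)|\leq(|\kappa_{\ell}|\ast|f|)(x)\leq C\,Mf(x),\qquad \ell\geq 0,
\end{equation*}
with a constant independent of $\ell$, because Haar measure scales as $r^{Q}$ under $D_{r}$ and the quasi-norm is $D_{r}$-homogeneous of degree one.

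Specialising this bound to $f=f_{\ell}$ and taking $\ell^{r}$-norms produces the pointwise inequality
\begin{equation*}
\left(\sum_{\ell=0}^{\infty}|\psi_{\ell}(\mathcal{R})f_{\ell}(x)|^{r}\right)^{1/r}\leq C\left(\sum_{\ell=0}^{\infty}(Mf_{\ell}(x))^{r}\right)^{1/r}.
\end{equation*}
The inequality \eqref{1pinfty} then follows by applying the Fefferman--Stein vector-valued maximal inequality
\begin{equation*}
\bigl\|\{Mf_{\ell}\}\bigr\|_{L^{p}(G,\ell^{r})}\leq C_{p,r}\bigl\|\{f_{\ell}\}\bigr\|_{L^{p}(G,\ell^{r})},\qquad 1<p,r<\infty,
\end{equation*}
which is available in any space of homogeneous type via the standard argument through the Calder\'on--Zygmund decomposition and interpolation. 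The weak-$(1,1)$ estimate \eqref{weakvectorvaluedinequality} follows in the same way from the weak-$(1,1)$ endpoint of the Fefferman--Stein inequality.

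The principal technical obstacle is the pointwise maximal domination with constants independent of $\ell$: it is not enough to observe that $\|\kappa_{\ell}\|_{L^{1}}=\|\kappa\|_{L^{1}}$, since one needs the stronger comparison $|\kappa_{\ell}\ast f|\lesssim Mf$, and this rests on the Schwartz decay of $\kappa$ furnished by Hulanicki's theorem and on the fact that this decay is preserved under the dilations $D_{2^{\ell/\nu}}$. Once this is in place, the vector-valued Fefferman--Stein inequality on $(G,|\cdot|,dx)$—which is a standard result on spaces of homogeneous type—closes the argument in both the strong and the weak-type cases.
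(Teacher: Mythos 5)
Your argument is correct, but it follows a genuinely different route from the one this paper relies on. The paper does not prove Theorem \ref{Theoremr} at all: it quotes it from \cite{CardonaRuzhanskyBesovSpaces}, where (as is mirrored in the proofs of Lemmas \ref{Lemma1} and \ref{Lemma2} in Section 4, with $W_\ell=A\psi_\ell(\mathcal{R})$ in place of $\psi_\ell(\mathcal{R})$) the estimate is obtained by running the Calder\'on--Zygmund machinery directly on the family $\{\psi_\ell(\mathcal{R})\}$: Hebisch's Calder\'on--Zygmund decomposition on $G$, the uniform kernel-difference estimates of the type \eqref{ThLiWeak} summed over $\ell$, and then Marcinkiewicz interpolation and duality. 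You instead prove the pointwise domination $|\psi_\ell(\mathcal{R})f|\lesssim Mf$ uniformly in $\ell$, which is legitimate: by Hulanicki's theorem for Rockland operators (cf.\ \cite[Ch.~4]{FR2}) the kernels $\kappa,\kappa_0$ are Schwartz, the $\kappa_\ell$ are their $L^1$-normalised dilates by homogeneity of $\mathcal{R}$, and the dyadic-annulus majorisation argument transfers verbatim to a homogeneous group with a symmetric homogeneous quasi-norm; the strong bound \eqref{1pinfty} and the weak bound \eqref{weakvectorvaluedinequality} then both follow from the Fefferman--Stein vector-valued maximal inequality (including its weak $(1,1)$ endpoint) on the space of homogeneous type $(G,|\cdot|,dx)$. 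What your route buys is economy and uniformity: both endpoints come at once from a single standard black box, and no kernel regularity beyond Schwartz decay is needed. What the paper's route buys is self-containedness within the Calder\'on--Zygmund framework that is needed anyway for the multiplier theorem, and robustness: it survives when $\psi_\ell(\mathcal{R})$ is replaced by $A\psi_\ell(\mathcal{R})$, where the kernels are merely Calder\'on--Zygmund rather than Schwartz and no maximal-function domination is available. The only points you should make explicit are the choice of a Schwartz extension of $\psi_0$ to the real line before invoking Hulanicki, and the fact that the Fefferman--Stein inequality on spaces of homogeneous type (itself proved via a Calder\'on--Zygmund decomposition) is being used as an external ingredient.
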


\section{Triebel-Lizorkin spaces on graded Lie groups} In this section, Triebel-Lizorkin spaces  on graded Lie groups are introduced. They can be defined by using positive Rockland operators. As in the introduction, let us fix $\eta\in C^{\infty}_0(\mathbb{R}^{+},[0,1]),$ $\eta\neq 0,$ so that $\textnormal{supp}(\eta)\subset [1/2,2],$ and such that 
\begin{equation}
    \sum_{j\in \mathbb{Z}}\eta(2^{-j}\lambda)=1,\,\,\lambda>0.
\end{equation}  Fixing $\psi_0(\lambda):=\sum_{j=-\infty}^{0} \eta_j(\lambda),$ and for $j\geq 1,$ $\psi_j(\lambda):=\eta(2^{-j}\lambda),$ we  have
\begin{equation}
    \sum_{\ell=0}^{\infty}\psi_\ell(\lambda)=1,\,\,\lambda>0.
\end{equation} Let $\mathcal{R}$ be a positive Rockland operator and let us  define the family of operators $\psi_j(\mathcal{R})$ using the functional calculus. Then, for $0<q<\infty,$ and $1<p<\infty,$ the Triebel-Lizorkin space $F^{r,\mathcal{R}}_{p,q}(G)$ consists of the distributions $f\in \mathscr{D}'(G)$ such that
\begin{equation*}
 \Vert f\Vert_{F^{r,\mathcal{R}}_{p,q}(G)}:=   \left\Vert\left(\sum_{\ell=0}^{\infty}2^{\frac{\ell r q}{\nu}}\left|\psi_{\ell}(\mathcal{R})f\right|^{q}\right)^{\frac{1}{q}} \right\Vert_{L^p(G)}<\infty,
\end{equation*}and for $p=1,$ the weak-$F^{r}_{1,q}(G)$ space  is defined by the distributions $f\in \mathscr{D}'(G)$ such that
\begin{equation}
 \Vert f \Vert_{\textrm{weak-}F^{r,\mathcal{R}}_{1,q}(G)}    :=   \sup_{t>0}t\left|\left\{x\in G:\left(\sum_{\ell=0}^\infty2^{\frac{\ell rq}{\nu}}|\psi_\ell(\mathcal{R}) f(x)|^q   \right)^{\frac{1}{q}}>t \right\}\right|<\infty.
\end{equation}
\begin{remark}
In the formulation of the Triebel-Lizorkin spaces we use (smooth) dyadic decompositions instead of characteristic functions of intervals because, same as in $\mathbb{R}^n,$ characteristics functions applied to Rockland operators are in general unbounded operators on $L^p(G),$ see \cite{CardonaFeff,Fef} for instance.
\end{remark}
In the following theorem we study some embedding properties for Triebel-Lizorkin spaces and we show, that they are independent on the choice of the positive Rockland operator $\mathcal{R}.$ For a consistent investigation of Triebel-Lizorkin spaces on compact Lie groups, we refer the reader to \cite{NRT} (and to  \cite[Chapter 6]{CardonaRuzhanskySubellipticBesov} for the Triebel-Lizorkin spaces associated to sub-Laplacians).

\begin{theorem}\label{independence}
 Let $G$ be a graded Lie group and let $\mathcal{R}_1$ and $\mathcal{R}_2$ be two positive Rockland operators on $G$ of homogeneous degrees $\nu_1>0$ and $\nu_2>0,$ respectively.  Then we have the following properties.
\begin{itemize}
    \item[(1)] For $1<p,q<\infty,$ $$F^{r}_{p,q}(G):=F^{r,\mathcal{R}_1}_{p,q}(G)=F^{r,\mathcal{R}_2}_{p,q}(G)$$ and for $p=1,$ $$\textrm{weak-}F^{r}_{1,q}(G):=\textrm{weak-}F^{r,\mathcal{R}_1}_{1,q}(G)= \textrm{weak-}F^{r,\mathcal{R}_2}_{1,q}(G),$$ where the coincidence of these spaces is understood in the sense that the topologies induced by their norms are equivalent.
    \item[(2)] $F^{r+\varepsilon,\mathcal{L} }_{p,q_1}(G)\hookrightarrow F^{r,\mathcal{L} }_{p,q_1}(G) \hookrightarrow F^{r,\mathcal{L} }_{p,q_2}(G) \hookrightarrow F^{r,\mathcal{L} }_{p,\infty}(G),$ $\varepsilon >0,$ $0\leq p\leq \infty,$ $0\leq q_1\leq q_2\leq \infty.$
    \item[(3)] $F^{r+\varepsilon,\mathcal{L} }_{p,q_1}(G) \hookrightarrow F^{r,\mathcal{L} }_{p,q_2}(G), $ $\varepsilon >0,$ $0\leq p\leq \infty,$ $1\leq q_2< q_1< \infty.$
    \item[(4)] $F^{r}_{p,2}(G)=L^p_r(G)$ for all $r\in \mathbb{R},$ and all $1<p<\infty,$ where $L^p_r(G)$ are Sobolev spaces on $G.$
\end{itemize}
 \end{theorem}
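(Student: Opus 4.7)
The plan is to handle parts (2)--(4) with standard tools and then concentrate the main effort on the Rockland-independence claim in part (1), which I expect to be the main obstacle.

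Parts (2) and (3) reduce to pointwise $\ell^q$-inequalities on the sequence $a_\ell(x):=2^{\ell r/\nu}\psi_\ell(\mathcal{R})f(x)$ followed by $L^p$-integration. For (2), the continuous embedding $\ell^{q_1}\hookrightarrow\ell^{q_2}$ for $q_1\le q_2$ yields $F^{r,\mathcal{R}}_{p,q_1}\hookrightarrow F^{r,\mathcal{R}}_{p,q_2}$, and the first embedding $F^{r+\varepsilon,\mathcal{R}}_{p,q_1}\hookrightarrow F^{r,\mathcal{R}}_{p,q_1}$ is immediate from $2^{\ell r/\nu}\le 2^{\ell(r+\varepsilon)/\nu}$ for $\ell\ge 0$. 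For (3), I would define $q_0$ by $1/q_2=1/q_0+1/q_1$; H\"older in $\ell$ with $(2^{-\ell\varepsilon/\nu})\in\ell^{q_0}$ produces the smoothness--index trade-off.

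For part (4), set $g:=(I+\mathcal{R})^{r/\nu}f$, so that $\|g\|_{L^p}=\|f\|_{L^p_r}$ by definition. On $\mathrm{supp}\,\psi_\ell=[2^{\ell-1},2^{\ell+1}]$ (for $\ell\ge 1$) one has $(1+\lambda)^{r/\nu}\sim 2^{\ell r/\nu}$, so
\[
\psi_\ell(\mathcal{R})(I+\mathcal{R})^{r/\nu}=2^{\ell r/\nu}\,\tilde m_\ell(2^{-\ell}\mathcal{R})\psi_\ell(\mathcal{R}),\qquad \tilde m_\ell(\mu):=(2^{-\ell}+\mu)^{r/\nu},
\]
with $\{\tilde m_\ell\}_\ell$ a uniformly H\"ormander--Mihlin family (its derivatives satisfy Mihlin-type bounds uniformly in $\ell$). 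Combining Theorem \ref{LPT} applied to $g$ with the uniform HM-bound from Theorem \ref{HMT}, together with the vector-valued inequality of Theorem \ref{Theoremr} (applied to both $\tilde m_\ell$ and its reciprocal, which is of the same form), yields $\|g\|_{L^p}\asymp \|f\|_{F^{r,\mathcal{R}}_{p,2}}$.

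Part (1) is the main obstacle. Given positive Rockland operators $\mathcal{R}_1,\mathcal{R}_2$ of degrees $\nu_1,\nu_2$ with dyadic pieces $\psi_\ell^{(i)}(\mathcal{R}_i)$, set $h_k:=2^{kr/\nu_2}\psi_k^{(2)}(\mathcal{R}_2)f$ and expand via $\sum_k\psi_k^{(2)}(\mathcal{R}_2)=I$:
\[
    2^{\ell r/\nu_1}\psi_\ell^{(1)}(\mathcal{R}_1)f=\sum_{k\ge 0}2^{\ell r/\nu_1-kr/\nu_2}\psi_\ell^{(1)}(\mathcal{R}_1)h_k.
\]
The target is to bound this double sum in $L^p(G;\ell^q)$. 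By Lemma \ref{lemmaLP}, the HM norm of $\psi_\ell^{(1)}(\pi(\mathcal{R}_1))$ is invariant under the dilation group of $\mathcal{R}_1$, which absorbs its $\ell$-dependence into that of a single fixed symbol with compactly supported spectral localisation. For pairs with $|\ell\nu_1-k\nu_2|\gg 1$ the spectral supports of $\psi_\ell^{(1)}(\mathcal{R}_1)$ and of $h_k$ (essentially $\{\lambda\sim 2^{k\nu_2}\}$) are disjoint, which should yield geometric decay from Calder\'on--Zygmund kernel estimates of the type recorded in Remark \ref{theremarkof}. The hardest step, in my opinion, is to make this decay quantitative and assemble the pieces via a vector-valued version of Theorem \ref{HMT}: running the Calder\'on--Zygmund scheme of Remark \ref{theremarkof} in the $\ell^q$-valued setting, combined with Theorem \ref{Theoremr}, should yield $\|(\sum_k T_{\ell,k}h_k)_\ell\|_{L^p(\ell^q)}\lesssim\|(h_k)_k\|_{L^p(\ell^q)}$ with $T_{\ell,k}:=2^{\ell r/\nu_1-kr/\nu_2}\psi_\ell^{(1)}(\mathcal{R}_1)\psi_k^{(2)}(\mathcal{R}_2)$. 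The weak-type endpoint $p=1$ then follows from the weak vector-valued inequality \eqref{weakvectorvaluedinequality}.
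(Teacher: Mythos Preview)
Your treatment of (2)--(4) is fine and matches the paper's approach (the paper simply cites Triebel for (2)--(3) and, for (4), notes that $(1+\mathcal{R})^{r/\nu}$ is simultaneously an isomorphism $F^r_{p,2}\to F^0_{p,2}$ and $L^p_r\to L^p$, together with $F^0_{p,2}=L^p$ from Theorem~\ref{LPT}).

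Your plan for (1), however, has a genuine gap. You write that ``for pairs with $|\ell\nu_1-k\nu_2|\gg 1$ the spectral supports of $\psi_\ell^{(1)}(\mathcal{R}_1)$ and of $h_k$ \dots\ are disjoint''. But $\mathcal{R}_1$ and $\mathcal{R}_2$ are two different positive Rockland operators which in general do \emph{not} commute, so their spectral projections carry no mutual orthogonality, and there is no a priori reason for $\psi_\ell^{(1)}(\mathcal{R}_1)\psi_k^{(2)}(\mathcal{R}_2)$ to be small (let alone zero) when the indices are far apart. Remark~\ref{theremarkof} records Calder\'on--Zygmund kernel estimates for multipliers of a \emph{single} Rockland operator; it says nothing about compositions of spectral cutoffs built from two different ones, so the ``geometric decay'' you invoke is not available from the tools at hand.

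The paper closes this gap by introducing a third positive Rockland operator
\[
\mathcal{R}:=\mathcal{R}_1^{\nu_2}+\mathcal{R}_2^{\nu_1},
\]
which, being a sum of two positive operators, dominates each of $\mathcal{R}_1^{\nu_2}$ and $\mathcal{R}_2^{\nu_1}$ in the form sense. This domination yields spectral-measure relations between the projections $E$ of $\mathcal{R}$ and $E_i$ of $\mathcal{R}_i$, from which one obtains \emph{exact vanishing} of the triple products: there exists $M_0\in\mathbb{N}$, independent of the indices, with
\[
\psi_{\ell_1}(\mathcal{R}_1)\,\psi_{\ell_3}(\mathcal{R})\,\psi_{\ell_2}(\mathcal{R}_2)=0\qquad\text{whenever }|\ell_1-\ell_2|\ge M_0\ \text{or}\ |\ell_1-\ell_3|\ge M_0.
\]
Inserting $I=\sum_{\ell_3}\psi_{\ell_3}(\mathcal{R})$ between the two original cutoffs and using this identity collapses your double sum to a finite-diagonal one; two applications of the vector-valued inequality of Theorem~\ref{Theoremr} (and of \eqref{weakvectorvaluedinequality} for the weak endpoint) then finish the proof at $r=0$, with general $r$ following by the isomorphism $(1+\mathcal{R}_i)^{r/\nu_i}$. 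The bridging operator $\mathcal{R}$ is the missing idea in your proposal.
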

\begin{proof}
The proof of (2) and  (3) are only an adaptation of the arguments presented in Triebel \cite{Triebel1983}. Because $F^{0}_{p,2}(G)=L^p(G),$ in view of the Littlewood-Paley theorem (Theorem \ref{LPT}) and the fact that $(1+\mathcal{R}_1)^{\frac{r}{\nu_1}}: F^{r}_{p,2}(G)\rightarrow F^{0}_{p,2}(G), $ and  $(1+\mathcal{R}_1)^{\frac{r}{\nu_1}}: L^p_r(G)\rightarrow L^p(G)$ are isomorphisms for any $r\in \mathbb{R}$, we conclude that   $F^{r}_{p,2}(G)=L^p_r(G)$ proving (4). So, to conclude the proof of the theorem, we will prove (1). Let us define the positive Rockland operator $\mathcal{R}:=\mathcal{R}_1^{\nu_2}+\mathcal{R}_2^{\nu_1}.$ Let us define for any $\ell\geq 0,$ the operator $\psi_{\ell}(\mathcal{R})$ by using the functional calculus. So, the relation 
\begin{equation}
    \sum_{\ell=0}^{\infty}\psi_\ell(\lambda)=1,\,\,\lambda>0,
\end{equation}implies that $ \sum_{\ell=0}^{\infty}\psi_\ell(\mathcal{R})=I$ converges to the identity operator $I$ on $L^2(G)$ in the strong topology induced by the norm of $L^2(G).$ Observe that from the properties of the supports of the dyadic decomposition $\psi_{\ell},$ $\ell\in \mathbb{N},$ for $\ell_1,\ell_2$ $\ell_3\in \mathbb{N},$ and the fact that $\textnormal{supp}(\psi_{\ell_i})\subset[2^{\ell_i-1},2^{\ell_i+1}],$ $1\leq i\leq 3,$ we have
 \begin{equation}\label{disjoint}
     \psi_{\ell_1}(\mathcal{R}_1)\psi_{\ell_3}(\mathcal{R})\psi_{\ell_2}(\mathcal{R}_2)\equiv 0,\,\,\,|\ell_2-\ell_1|,\,|\ell_3-\ell_1|\geq M_{0},
 \end{equation}for some  $M_{0}\in \mathbb{N},$ independent of $\ell_{1},\ell_2,\ell_3\in \mathbb{N}_0.$ These can be deduced from the properties of the spectral measures $dE_{i}$ associated to $\mathcal{R}_i$ and $dE$ associated to $\mathcal{R}$ respectively. Indeed, by following \cite[Page 20]{FR}, for all $a,b>0,$
 \begin{equation}
     E(-\infty,a^{\nu_1}]E_1[a,\infty)\equiv 0,\,\,\textnormal{  and  }  E_1(b^{\frac{1}{\nu_2}},\infty)E[b,\infty)\equiv 0.
 \end{equation}Because  $$\psi_{\ell_i}(\mathcal{R}_i)=E_{i}[2^{\ell_i-1},2^{\ell_i+1}]\psi_{\ell_i}(\mathcal{R}_i)E_{i}[2^{\ell_i-1},2^{\ell_i+1}],$$
and 
$$ \,\,\psi_{\ell_3}(\mathcal{R})=E[2^{\ell_3-1},2^{\ell_3+1}]\psi_{\ell_3}(\mathcal{R})E[2^{\ell_3-1},2^{\ell_3+1}],   $$
the existence of $M_0\in \mathbb{N}$ in \eqref{disjoint} follows. Now, let us use this analysis to prove that $F^{0,\mathcal{R}_1}_{p,q}(G)=F^{0,\mathcal{R}_2}_{p,q}(G)$  for $1<p<\infty,$ and that  $\textrm{weak-}F^{0,\mathcal{R}_1}_{1,q}(G)= \textrm{weak-}F^{0,\mathcal{R}_2}_{1,q}(G).$ From this case, the coincidence of the spaces $F^{r,\mathcal{R}_1}_{p,q}(G)=F^{r,\mathcal{R}_2}_{p,q}(G)$ and $\textrm{weak-}F^{r,\mathcal{R}_1}_{1,q}(G)= \textrm{weak-}F^{t,\mathcal{R}_2}_{1,q}(G)$ follows from the fact that $(1+\mathcal{R}_i)^{\frac{\nu}{r_i}}:F^{r,\mathcal{R}_i}_{p,q}(G)\rightarrow F^{0,\mathcal{R}_i}_{p,q}(G)$ is an isomorphism, for any $r\in \mathbb{R}.$ Now in view of \eqref{disjoint},  let us observe that for $1<q<\infty,$ we can estimate
\begin{align*}
   \left(\sum_{\ell_1=0}^{\infty}\left|\psi_{\ell_1}(\mathcal{R}_1)f\right|^{q}\right)^{\frac{1}{q}} &\leq \left(\sum_{\ell_1=0}^{\infty}\left|\sum_{\ell_2,\ell_3=0}^{\infty}\psi_{\ell_1}(\mathcal{R}_1)\psi_{\ell_3}(\mathcal{R})\psi_{\ell_2}(\mathcal{R}_2)f\right|^{q}\right)^{\frac{1}{q}}\\
   &\leq   \left(\sum_{\ell_1=0}^{\infty}\sum_{\ell_2,\ell_3=0}^{\infty}\left|\psi_{\ell_1}(\mathcal{R}_1)\psi_{\ell_3}(\mathcal{R})\psi_{\ell_2}(\mathcal{R}_2)f\right|^{q}\right)^{\frac{1}{q}}\\
   &\leq   \left(\sum_{\ell_1=0}^{\infty}\sum_{|\ell_2-\ell_1|,\,|\ell_3-\ell_1|\leq M_0}\left|\psi_{\ell_1}(\mathcal{R}_1)\psi_{\ell_3}(\mathcal{R})\psi_{\ell_2}(\mathcal{R}_2)f\right|^{q}\right)^{\frac{1}{q}}.
\end{align*}So, we have
\begin{equation}\label{Differences}
   \left(\sum_{\ell_1=0}^{\infty}\left|\psi_{\ell_1}(\mathcal{R}_1)f\right|^{q}\right)^{\frac{1}{q}} \leq   \left(\sum_{\ell_1=0}^{\infty}\sum_{|\ell_2-\ell_1|,\,|\ell_3-\ell_1|\leq M_0}\left|\psi_{\ell_1}(\mathcal{R}_1)\psi_{\ell_3}(\mathcal{R})\psi_{\ell_2}(\mathcal{R}_2)f\right|^{q}\right)^{\frac{1}{q}}.  
\end{equation}
Now, taking the $L^p$-norm (or the $L^{1,\infty}$-norm) in both sides of \eqref{Differences}, and by applying the vector-valued inequality \eqref{1pinfty}, two times, we obtain
\begin{align*}
     \Vert f\Vert_{F^{0,\mathcal{R}_1}_{p,q}}&=\left\Vert  \left(\sum_{\ell_1=0}^{\infty}\left|\psi_{\ell_1}(\mathcal{R}_1)f\right|^{q}\right)^{\frac{1}{q}}   \right\Vert_{L^{p}(G)}\\
     &\leq  \left\Vert  \left(\sum_{\ell_1=0}^{\infty}\sum_{|\ell_2-\ell_1|,\,|\ell_3-\ell_1|\leq M_0}\left|\psi_{\ell_1}(\mathcal{R}_1)\psi_{\ell_3}(\mathcal{R})\psi_{\ell_2}(\mathcal{R}_2)f\right|^{q}\right)^{\frac{1}{q}}   \right\Vert_{L^{p}(G)}\\
     &\lesssim_{p,\, M_0}\left\Vert  \left(\sum_{\ell_1=0}^{\infty}\sum_{\,|\ell_2-\ell_1|\leq M_0}\left|\psi_{\ell_1}(\mathcal{R})\psi_{\ell_2}(\mathcal{R}_2)f\right|^{q}\right)^{\frac{1}{q}}   \right\Vert_{L^{p}(G)}\\
     &\lesssim_{p,\, M_0}\left\Vert  \left(\sum_{\ell_1=0}^{\infty}\left|\psi_{\ell_1}(\mathcal{R}_2)f\right|^{q}\right)^{\frac{1}{q}}   \right\Vert_{L^{p}(G)}\\
     &=\Vert f\Vert_{F^{0,\mathcal{R}_2}_{p,q}}.
\end{align*}This proves that $\Vert f\Vert_{F^{0,\mathcal{R}_1}_{p,q}} \lesssim_{p,\, M_0}\Vert f\Vert_{F^{0,\mathcal{R}_2}_{p,q}}.$ In a similar way we can prove that  $\Vert f\Vert_{F^{0,\mathcal{R}_2}_{p,q}} \lesssim_{p,\, M_0}\Vert f\Vert_{F^{0,\mathcal{R}_1}_{p,q}}.$ The same analysis applying the $L^{1,\infty}$-norm in both sides of  \eqref{Differences}, and using the weak vector-valued inequality \eqref{weakvectorvaluedinequality} imply that  $$\Vert f\Vert_{\textrm{weak-}F^{0,\mathcal{R}_1}_{1,q}(G)} \lesssim_{p,\, M_0}\Vert f\Vert_{\textrm{weak-}F^{0,\mathcal{R}_2}_{1,q}(G)} \lesssim_{p,\, M_0} \Vert f\Vert_{\textrm{weak-}F^{0,\mathcal{R}_1}_{1,q}(G)}.$$ So, we have proved (1). The proof of Theorem \ref{independence} is complete.
\end{proof}

\section{H\"ormander-Mihlin multipliers on Triebel-Lizorkin spaces: Proof of Theorem \ref{HMTTL}}

Let us fix $\eta\in C^{\infty}_0(\mathbb{R}^{+},[0,1]),$ $\eta\neq 0,$ so that $\textnormal{supp}(\eta)\subset [1/2,2],$ and such that 
\begin{equation}
    \sum_{j\in \mathbb{Z}}\eta(2^{-j}\lambda)=1,\,\,\lambda>0.
\end{equation}
By defining $\psi_0(\lambda):=\sum_{j=-\infty}^{0} \eta_j(\lambda),$ and for $j\geq 1,$ $\psi_j(\lambda):=\eta(2^{-j}\lambda),$ we obviously have
\begin{equation}
    \sum_{\ell=0}^{\infty}\psi_\ell(\lambda)=1,\,\,\lambda>0.
\end{equation}

\begin{proof}[Proof of Theorem \ref{HMTTL}] 
By observing that  $(1+\mathcal{R})^{\frac{r}{\nu}}:F^{r}_{p,q}(G)\rightarrow F^{0}_{p,q}(G)$ and $ (1+\mathcal{R})^{-\frac{r}{\nu}}: F^{0}_{p,q}(G)\rightarrow F^{r}_{p,q}(G)$ are isomorphism,  it is suffices to prove that $A$ admits a bounded extension from $ F^{0}_{p,q}(G)$ into $ F^{0}_{p,q}(G).$ For this, let us define the vector-valued operator $W:L^2(G,\mathcal{\ell}^2(\mathbb{N}_0))\rightarrow L^2(G,\mathcal{\ell}^2(\mathbb{N}_0))$ by
\begin{equation}
    W(\{g_{\ell}\}_{\ell=0}^{\infty}):=  (\{W_\ell g_{\ell}\}_{\ell=0}^{\infty}),\,\,W_{\ell}:=A\psi_{\ell}(\mathcal{R}).
\end{equation} Observe that $W$ is well-defined (bounded from $L^2(G,\mathcal{\ell}^2(\mathbb{N}_0)$ into  $L^2(G,\mathcal{\ell}^2(\mathbb{N}_0)$)), because $A$ admits a bounded extension on $L^2(G)$ and also, in view of the following estimate
\begin{align*}
  \Vert W(\{g_{\ell}\}_{\ell=0}^{\infty})\Vert^2_{ L^2(G,\mathcal{\ell}^2(\mathbb{N}_0))}
  &:=  \int\limits_{G} \sum_{\ell=0}^\infty |A\psi_\ell(\mathcal{R}) g_\ell(x)|^2dx\\
  &=\sum_{\ell=0}^\infty\int\limits_{G}|A\psi_\ell(\mathcal{R}) g_\ell(x)|^2dx\\
    &\leq \Vert A \Vert^2_{\mathcal{L}(L^2(G))}\sup_{\ell}\Vert \psi_\ell(\mathcal{R}) \Vert^2_{\mathcal{L}(L^2(G))}\sum_{\ell=0}^\infty\int\limits_{G}  | g_\ell(x)|^2dx\\
    &\leq \Vert A \Vert^2_{\mathcal{L}(L^2(G))}\sup_{\ell}\Vert \psi_\ell \Vert^2_{L^{\infty}(\mathbb{R})}\sum_{\ell=0}^\infty\int\limits_{G}  | g_\ell(x)|^2dx\\
    &= \Vert A \Vert^2_{\mathcal{L}(L^2(G))}\Vert \psi \Vert^2_{L^{\infty}(\mathbb{R})}\sum_{\ell=0}^\infty\int\limits_{G}  | g_\ell(x)|^2dx\\
    &\lesssim  \Vert \{g_{\ell}\}_{\ell=0}^{\infty}\Vert^2_{ L^2(G,\mathcal{\ell}^2(\mathbb{N}_0))}. 
\end{align*}
So, observe that, in order to prove Theorem \ref{HMTTL} it is enough to prove the following two lemmas.
\begin{lemma}\label{Lemma1}
$W:L^q(G,\mathcal{\ell}^q(\mathbb{N}_0))\rightarrow L^q(G,\mathcal{\ell}^q(\mathbb{N}_0))$ admits a bounded extension for all $1<q<\infty.$
\end{lemma}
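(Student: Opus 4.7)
The first key observation is that, since the outer and inner exponents in $L^q(G,\ell^q(\mathbb{N}_0))$ both equal $q$, Tonelli's theorem gives the isometric identification
\[
\|\{g_\ell\}\|_{L^q(G,\ell^q(\mathbb{N}_0))}^{q}=\int_G \sum_{\ell=0}^{\infty}|g_\ell(x)|^{q}\,dx=\sum_{\ell=0}^{\infty}\|g_\ell\|_{L^q(G)}^{q}.
\]
Since $W$ acts diagonally, $W(\{g_\ell\})=\{W_\ell g_\ell\}$, the same identity applied to $W(\{g_\ell\})$ yields
\[
\|W(\{g_\ell\})\|_{L^q(G,\ell^q(\mathbb{N}_0))}^{q}=\sum_{\ell=0}^{\infty}\|W_\ell g_\ell\|_{L^q(G)}^{q}\leq \Big(\sup_{\ell\geq 0}\|W_\ell\|_{\mathcal{L}(L^q(G))}\Big)^{q}\|\{g_\ell\}\|_{L^q(G,\ell^q(\mathbb{N}_0))}^{q}.
\]
So the lemma reduces to proving uniform $L^q(G)$-boundedness of the family $\{W_\ell\}_{\ell\geq 0}=\{A\psi_\ell(\mathcal{R})\}_{\ell\geq 0}$ for every $1<q<\infty$.

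To obtain this uniform bound, the plan is to estimate the two factors of $W_\ell=A\psi_\ell(\mathcal{R})$ separately. The hypothesis \eqref{LPCRUZFIS} together with the $L^p$-H\"ormander--Mihlin theorem on graded groups (Theorem \ref{HMT}) gives
\[
\|A\|_{\mathcal{L}(L^q(G))}\leq C\,\max\{\|\sigma\|_{L^2_s,l.u,L,\eta,\mathcal{R}},\,\|\sigma\|_{L^2_s,l.u,R,\eta,\mathcal{R}}\}<\infty.
\]
For the dyadic projection $\psi_\ell(\mathcal{R})$, the trivial pointwise majorisation $|\psi_\ell(\mathcal{R})f|\leq \big(\sum_{j=0}^{\infty}|\psi_j(\mathcal{R})f|^{2}\big)^{1/2}$ together with the Littlewood--Paley theorem (Theorem \ref{LPT}) yields $\|\psi_\ell(\mathcal{R})f\|_{L^q(G)}\leq C_q\|f\|_{L^q(G)}$ with $C_q$ independent of $\ell$. (Equivalently, for $\ell\geq 1$ one may use $\psi_\ell(\mathcal{R})=\psi_1(2^{-(\ell-1)}\mathcal{R})$: by homogeneity of $\mathcal{R}$ of degree $\nu$, the right-convolution kernel of $\psi_\ell(\mathcal{R})$ is a rescaling of that of $\psi_1(\mathcal{R})$ with the same $L^1(G)$-norm, and Young's inequality closes the estimate.) Composing these two bounds gives $\sup_{\ell}\|W_\ell\|_{\mathcal{L}(L^q(G))}<\infty$, which, combined with the diagonal reduction of the first paragraph, finishes the proof.

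I do not expect a real obstacle here: once the diagonal structure and the coincidence of the two $q$-indices collapse the vector-valued norm to $\ell^q(L^q(G))$ via Tonelli, the problem reduces to a single-operator uniform estimate which is essentially already contained in Theorem \ref{HMT} and Theorem \ref{LPT}. The genuinely vector-valued Calder\'on--Zygmund arguments are only needed later, when one has to treat $L^p(G,\ell^q(\mathbb{N}_0))$-boundedness of $W$ for $p\neq q$, presumably in the next lemma, and that is where the Calder\'on--Zygmund estimates \eqref{ThLiWeak} recorded in Remark \ref{theremarkof} will be used.
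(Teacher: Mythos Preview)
Your proof is correct and the reduction step (via Tonelli and the diagonal action of $W$) coincides with the paper's opening line. The difference lies in how the uniform $L^q$-bound for $W_\ell=A\psi_\ell(\mathcal{R})$ is obtained.

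You factor $W_\ell$ and invoke Theorem~\ref{HMT} for $A$ together with the Littlewood--Paley theorem (or the dilation/Young argument) for the uniform bound on $\psi_\ell(\mathcal{R})$. This is clean, short, and entirely legitimate: since Theorem~\ref{HMT} is already available in the paper as a black box, nothing further is needed. The paper instead establishes the uniform weak $(1,1)$ bound for $W_\ell$ directly, running a full Calder\'on--Zygmund decomposition and appealing to the kernel estimate~\eqref{ThLiWeak} for the kernels $\kappa_\ell$ of $W_\ell$; then it interpolates with the trivial $L^2$ case and dualises. In effect the paper re-proves, uniformly in $\ell$, a piece of Theorem~\ref{HMT} rather than citing it. What the paper's longer route buys is that the same Calder\'on--Zygmund machinery and the same kernel estimate~\eqref{ThLiWeak} are exactly what drive the proof of Lemma~\ref{Lemma2}, so Lemma~\ref{Lemma1} serves as a warm-up that rehearses the scalar version of the argument before the genuinely vector-valued step. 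Your closing remark anticipates this correctly: the Calder\'on--Zygmund input is indispensable only for $p\neq q$, i.e.\ in Lemma~\ref{Lemma2}.
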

\begin{lemma}\label{Lemma2}
$W:L^1(G,\mathcal{\ell}^q(\mathbb{N}_0))\rightarrow L^{1,\infty}(G,\mathcal{\ell}^q(\mathbb{N}_0))$ admits a bounded extension for all $1<q<\infty.$
\end{lemma}

\begin{remark}\label{RemarkInterpolation}Indeed, by the Marcinkiewicz interpolation, these two lemmas are enough to show that $W:L^p(G,\mathcal{\ell}^q(\mathbb{N}_0))\rightarrow L^{p}(G,\mathcal{\ell}^q(\mathbb{N}_0))$ admits a bounded extension for all $1<p\leq q<\infty.$ The case $1<q\leq p<\infty$ follows from the fact that  $L^{p'}(G,\mathcal{\ell}^{q'}(\mathbb{N}_0))$ is the dual of $L^{p}(G,\mathcal{\ell}^{q}(\mathbb{N}_0))$ and also that  Lemma \ref{Lemma1} and  Lemma \ref{Lemma2} hold if we change $A$ by its standard $L^2$-adjoint. Now, note that by defining $\psi_{-1}=\psi_0,$ we have
\begin{align*}
  \Vert Af\Vert_{F^{0}_{p,q}(G)}&=  \left\Vert \left(\sum_{l=0}^{\infty} |A\psi_{l}(\mathcal{R})f|^{q}    \right)^{\frac{1}{q}}\right\Vert_{L^{p}(G)}\\
  &\leq    \left\Vert \left(\sum_{l=0}^{\infty} |A\psi_{l}(\mathcal{R})[\psi_{l-1}(\mathcal{R})+\psi_{l}(\mathcal{R})+\psi_{l+1}(\mathcal{R})] f|^{q}    \right)^{\frac{1}{q}}\right\Vert_{L^{p}(G)}\\
  &=\Vert W(\{[\psi_{l-1}(\mathcal{R})+\psi_{l}(\mathcal{R})+\psi_{l+1}(\mathcal{R})] f\}_{l=0}^{\infty}) \Vert_{L^p(\ell^q)}\\
  &\lesssim \Vert \{ [\psi_{l-1}(\mathcal{R})+\psi_{l}(\mathcal{R})+\psi_{l+1}(\mathcal{R})] f\}_{l=0}^{\infty}) \Vert_{L^p(\ell^q)} \\
  &\lesssim \Vert f \Vert_{F^{0}_{p,q}(G)}. 
\end{align*} Also note that from Lemma \ref{Lemma2}, we have

\begin{align*}
  \Vert Af\Vert_{\textrm{weak-}F^{0}_{1,q}(G)}&=  \left\Vert \left(\sum_{l=0}^{\infty} |A\psi_{l}(\mathcal{R})f|^{q}    \right)^{\frac{1}{q}}\right\Vert_{L^{1,\infty}(G)}\\
  &\leq    \left\Vert \left(\sum_{l=0}^{\infty} |A\psi_{l}(\mathcal{R})[\psi_{l-1}(\mathcal{R})+\psi_{l}(\mathcal{R})+\psi_{l+1}(\mathcal{R})] f|^{q}    \right)^{\frac{1}{q}}\right\Vert_{L^{1,\infty}(G)}\\
  &=\Vert W(\{[\psi_{l-1}(\mathcal{R})+\psi_{l}(\mathcal{R})+\psi_{l+1}(\mathcal{R})] f\}_{l=0}^{\infty}) \Vert_{L^{1,\infty}(\ell^q)}\\
  &\lesssim \Vert \{ [\psi_{l-1}(\mathcal{R})+\psi_{l}(\mathcal{R})+\psi_{l+1}(\mathcal{R})] f\}_{l=0}^{\infty}) \Vert_{L^1(\ell^q)} \\
  &\lesssim \Vert f \Vert_{F^{0}_{1,q}(G)}. 
\end{align*}
So, knowing that Lemmas \ref{Lemma1} and \ref{Lemma2} are enough for proving Theorem \ref{HMTTL}, we will proceed with their proofs. 
\end{remark}
\begin{proof}[Proof of Lemma \ref{Lemma1}] It suffices to prove that the operators $W_{\ell}$ are uniformly bounded on $L^q(G).$ This is trivial for $q=2,$ so it is suffices (by the duality argument) that the operators $W_{\ell}$ are uniformly bounded from $L^1(G)$ into $L^{1,\infty}(G).$ So, we are going to prove that there exists a constant $C>0,$ independent of $f\in L^1(G),$ and  $\ell\in \mathbb{N}_0,$ such that
\begin{equation}\label{weak(1,1)inequalityWell}
    \left|\left\{x\in G:|W_\ell f(x)|  >t \right\}\right|\leq \frac{C}{t}\Vert f\Vert_{L^1(G)}.
\end{equation}

We start the proof by applying the Calder\'on-Zygmund decomposition Lemma to the non-negative function $f\in L^p(G)\cap L^1(G)\subset L^1(G),$ under the identification $G\simeq \mathbb{R}^n,$ (see, e.g. Hebish \cite{Hebish}) in order to obtain a suitable family of  disjoint open sets  $\{I_j\}_{j=0}^{\infty}$  such that
\begin{itemize}
    \item $f(x)\leq t,$ for $a.e.$ $x\in G\setminus \cup_{j\geq 0}I_j,$\\
    
    \item $\sum_{j\geq 0}|I_j|\leq \frac{C}{t}\Vert f\Vert_{L^1(G)},$ and\\ 
    
    \item $t|I_j|\leq \int_{I_j}f(x)dx\leq 2|I_j|t,$ for all $j.$ 
\end{itemize}Now, for every $j\in \mathbb{N}_0,$ let us define $R_j$ by
\begin{equation}\label{Rj}
    R_{j}:=\sup\{R>0: B(z_j,R)\subset I_j, \textnormal{   for some   }z_j\in I_j\},
\end{equation} where $B(z_j,R)=\{x\in I_j:|z_j^{-1}x|<R\}.$ Then, we can assume that every $I_j$ is bounded, and that  $I_j\subset B(z_j,2R_j),$ where $z_j\in I_j$ (see Hebish \cite{Hebish}). 
\begin{remark}
Before  continuing with the proof note that by assuming $f(e_G)>t,$ (this is just re-defining $f\in L^p(G)\cap L^1(G)$ at the identity element) we should have that
\begin{equation}\label{eGasump}
    e_{G}\in \bigcup_{j}I_j,
\end{equation} because $f(x)\leq t,$ for $a.e.$ $x\in G\setminus \cup_{j\geq 0}I_j.$
\end{remark}
Let us define, for every $x\in I_j,$
\begin{equation}
    g(x):=\frac{1}{|I_j|}\int\limits_{I_j}f(y)dy,\,\,\,b(x)=f(x)-g(x),
\end{equation} and for $x\in G\setminus\cup_{j\geq 0}I_j,$
\begin{equation}
    g(x)=f(x),\,\,\, b(x)=0.
\end{equation} Observe that for every $x\in I_j,$ 
\begin{align*}
    |g(x)|= \left|   \frac{1}{|I_j|}\int\limits_{I_j}f(y)dy \right|\leq 2t.
\end{align*}
By applying the Minkowski inequality, we have \begin{align*}
   & \left|\left\{x\in G:|W_{\ell} f(x)|>t \right\}\right|\leq \left|\left\{x\in G:|W_{\ell} g(x)|>\frac{t}{2} \right\} \right|\\
    &\hspace{3cm}+\left|\left\{x\in G:|W_{\ell} b(x)|>\frac{t}{2} \right\}\right|.
\end{align*}
By the Chebyshev inequality, we have
\begin{align*}
    & \left|\left\{x\in G:|W_\ell f(x)|>t \right\}\right|\\
    & \leq \left|\left\{x\in G:|W_{\ell} g(x)|>\frac{t}{2} \right\} \right|+\left|\left\{x\in G:|W_{\ell} b(x)|>\frac{t}{2} \right\}\right|\\
    &=  \left|\left\{x\in G:|W_{\ell} g(x)|^2>\frac{t^{2}}{2^2} \right\} \right|+\left|\left\{x\in G:|W_{\ell} b(x)|>\frac{t}{2} \right\}\right|\\
    &\leq \frac{2^2}{t^2}\int\limits_{G}|W_\ell g(x)|^2dx+ \left|\left\{x\in G:|W_{\ell}b(x)|>\frac{t}{2} \right\}\right|\\
    &\leq \frac{2^2}{t^2}\sup_{\ell}\Vert W_{\ell}\Vert_{\mathcal{L}(L^2(G))}\int\limits_{G}| g(x)|^2dx+ \left|\left\{x\in G:|W_{\ell}b(x)|>\frac{t}{2} \right\}\right|\\
    &\lesssim \frac{2^2}{t^2}\int\limits_{G}| g(x)|^2dx+ \left|\left\{x\in G:|W_{\ell}b(x)|>\frac{t}{2} \right\}\right|,
\end{align*}
in view of the $L^2(G)$-boundedness of $A$ and the fact that the operators $\psi_{\ell}(\mathcal{R})$ are $L^2(G)$-bounded uniformly  in $\ell.$ Also, note that the  estimate
\begin{align*}
    \Vert g\Vert_{L^2(G)}^2&=\int\limits_{G}|g(x)|^2dx=\sum_{j}\int\limits_{I_j}|g(x)|^2dx+\int\limits_{G\setminus \cup_{j}I_j}|g(x)|^2dx\\
    &=\sum_{j}\int\limits_{I_j}|g(x)|^2dx+\int\limits_{G\setminus \cup_{j}I_j}|f(x)|^2dx\\
    &\leq \sum_{j}\int\limits_{I_j}(2t)^{2}dx+\int\limits_{G\setminus \cup_{j}I_j}f(x)^2dx\lesssim t^{2}\sum_{j}|I_j|+\int\limits_{G\setminus \cup_{j}I_j}f(x)f(x)dx\\
    &\leq t^{2}\times \frac{C}{t}\Vert f\Vert_{L^1(G)}+t\int\limits_{G\setminus \cup_{j}I_j}f(x)dx\lesssim t\Vert f\Vert_{L^1(G)},
\end{align*} implies that,
\begin{align*}
  \Small{ \left|\left\{x\in G:|W_\ell f(x)|>t \right\}\right|\leq \frac{4}{t}\Vert f\Vert_{L^1(G)}+ \left|\left\{x\in G:|W_\ell b(x)|^2   >\frac{t}{2} \right\}\right|.}
\end{align*}
Taking into account that $b\equiv 0$ on $G\setminus \cup_j I_j,$ we have that
\begin{equation}
    b=\sum_{k}b_k,\,\,\,b_k(x)=b(x)\cdot 1_{I_k}(x).
\end{equation} Let us assume that $I_{j}^*$ is an open set, such that $I_j\subset I_j^*,$ and $|I_{j}^*|=K|I_{j}|$ for some $K>0,$ and $\textnormal{dist}(\partial I_{j}^*,\partial I_{j})\geq 4c\,\textnormal{dist}(\partial I_{j},e_{G}),$ where $c>0$ and $e_{G}$ is the identity element of $G$.  So, by the Minkowski inequality we have,
\begin{align*}
     & \left|\left\{x\in G:|W_\ell b(x)|  >\frac{t}{2} \right\}\right|\\
      &=\left|\left\{x\in \cup_j I_j^*:|W_\ell b(x)|   >\frac{t}{2} \right\}\right|+\left|\left\{x\in G\setminus  \cup_j I_j^*:|W_\ell b(x)|   >\frac{t}{2} \right\}\right|\\
      &\leq \left|\left\{x\in G:x\in \cup_j I_j^* \right\}\right|+\left|\left\{x\in G\setminus  \cup_j I_j^*:|W_\ell b(x)|   >\frac{t}{2} \right\}\right|.
      \end{align*} In consequence, we have the estimates,
      \begin{align*} & \left|\left\{x\in G:|W_\ell b(x)|  >\frac{t}{2} \right\}\right|\leq\sum_{j}|I_j^*|+\left|\left\{x\in G\setminus  \cup_j I_j^*:|W_\ell b(x)|   >\frac{t}{2} \right\}\right|\\
      &=K\sum_{j}|I_j|+\left|\left\{x\in G\setminus  \cup_j I_j^*:|W_\ell b(x)|   >\frac{t}{2} \right\}\right|\\
      &\leq \frac{CK}{t}\Vert f\Vert_{L^1(G)}+\left|\left\{x\in G\setminus  \cup_j I_j^*:|W_\ell b(x)|   >\frac{t}{2} \right\}\right|.
  \end{align*}The Chebyshev inequality allows to estimate the right hand side above as follows,
  \begin{align*}
     & \left|\left\{x\in G\setminus  \cup_j I_j^*:|W_\ell b(x)|   >\frac{t}{2} \right\}\right|\leq\frac{2}{t}\int\limits_{ G\setminus  \cup_j I_j^*} |W_{\ell}b(x)|dx\\
     &\leq  \frac{2}{t}\sum_{k}\int\limits_{ G\setminus  \cup_j I_j^*} |W_{\ell}b_k(x)|dx.
  \end{align*}
From now, let us denote by $\kappa_\ell$  the right convolution kernel of     $W_\ell:=A\psi_\ell(\mathcal{R}) .$ Observe that
\begin{align*}
     &\left|\left\{x\in G\setminus  \cup_j I_j^*:|W_\ell b(x)|   >\frac{t}{2} \right\}\right|\leq \frac{2}{t}\sum_{k}\int\limits_{ G\setminus  \cup_j I_j^*} |W_{\ell}b_k(x)|dx\\
     &= \frac{2}{t}\sum_{k}\int\limits_{ G\setminus  \cup_j I_j^*}\left|b_k   \ast \kappa_{\ell}(x)\right|dx\\
     &= \frac{2}{t}\sum_{k}\int\limits_{ G\setminus  \cup_j I_j^*}\left|\int\limits_{I_k}b_k(z)\kappa_{\ell}(z^{-1}x)dz   \right|dx.
     \end{align*} By using that the average of $b_{k}$ on $I_k$ is zero, $\int_{I_k}b_{k}(z)dz=0,$ we have
     \begin{align*}
     \frac{2}{t}\sum_{k} \int\limits_{ G\setminus  \cup_j I_j^*}  &\left|\int\limits_{I_k}b_k(z)\kappa_{\ell}(z^{-1}x)dz   \right|dx\\
      &=\frac{2}{t}\sum_{k}\int\limits_{ G\setminus  \cup_j I_j^*}\left|\int\limits_{I_k}b_k(z)\kappa_{\ell}(z^{-1}x)dz  - \kappa_{\ell}(x)\int\limits_{I_k}b_{k}(z)dz \right|dx\\
       &=\frac{2}{t}\sum_{k}\int\limits_{ G\setminus  \cup_j I_j^*}\left|\int\limits_{I_k}(\kappa_{\ell}(z^{-1}x)  - \kappa_{\ell}(x))b_{k}(z) dz\right|dx.
     \end{align*}
 If we assume for a moment that
 \begin{equation}\label{calderonkernels}
    M=\sup_{k} \sup_{z\in I_k}\sum_{\ell=0}^\infty\int\limits_{ G\setminus  \cup_j I_j^*}|\kappa_{\ell}(z^{-1}x)  - \kappa_{\ell}(x)|dx<\infty,
 \end{equation}then we have
     \begin{align*}
          & \left|\left\{x\in G\setminus  \cup_j I_j^*:|W_\ell b(x)|   >\frac{t}{2} \right\}\right|\leq \frac{2M}{t}\sum_{k}\int\limits_{I_k}|b_{k}(z)|dz\\
          &=\frac{2M}{t}\| b\|_{L^1(G)}\leq \frac{6M}{t}\| f\|_{L^1(G)}.
     \end{align*}
     So, if we prove the estimate \eqref{calderonkernels} we obtain the weak (1,1) inequality \eqref{weak(1,1)inequality} for $f\in L^p(G)\cap L^1(G),$ $f\geq 0$. For the proof of \eqref{calderonkernels} let us use the estimates of the Calder\'on-Zygmund kernel of every operator $W_{\ell}.$ Let us point out that (in view of \eqref{eGasump} and from \cite[Page 17]{CardonaRuzhanskyBesovSpaces}) for $x\in G\setminus \cup_jI_j^*,$   and  $z\in I_{k},$ $4c|z|=4c\times \textnormal{dist}(z,e_G)\lesssim  \textnormal{dist}(\partial I_{k}^*,\partial I_{k})  \leq|x|.$ 
     So, by a suitable variable change of variables  and by using \eqref{ThLiWeak}, we have 
     \begin{align*}
        M_k:= \sup_{z\in I_k}\sum_{\ell=0}^\infty\int\limits_{ G\setminus  \cup_j I_j^*} & |\kappa_{\ell}(z^{-1}x)  - \kappa_{\ell}(x)|dx\\
         &=\sup_{z\in I_k}\sum_{\ell=0}^\infty\int\limits_{ G\setminus  \cup_j I_j^*}|2^{-\ell Q}\kappa_{\ell}(2^{-\ell}\cdot z^{-1}x)  - 2^{-\ell Q}\kappa_{\ell}(2^{-\ell }\cdot x)|dx\\
         &\leq \sup_{z\in I_k} \sum_{\ell=0}^\infty\int\limits_{ |x|>4c|z|}|2^{-\ell Q}\kappa_{\ell}(2^{-\ell}\cdot z^{-1}x)  - 2^{-\ell Q}\kappa_{\ell}(2^{-\ell }\cdot x)|dx\\
         &\leq \sum_{\ell=0}^\infty \sup_{z\in G}           \int\limits_{ |x|>4c|z|}|2^{-\ell Q}\kappa_{\ell}(2^{-\ell}\cdot z^{-1}x)  - 2^{-\ell Q}\kappa_{\ell}(2^{-\ell }\cdot x)|dx\\
          &=  \sum_{\ell=0}^\infty \mathscr{I}_{\ell}\lesssim \sum_{\ell=0}^\infty  2^{-\ell\varepsilon_0}=O(1).
     \end{align*}Because $$ M_k:= \sup_{z\in I_k}\sum_{\ell=0}^\infty\int\limits_{ G\setminus  \cup_j I_j^*}  |\kappa_{\ell}(z^{-1}x)  - \kappa_{\ell}(x)|dx\lesssim \sum_{\ell=0}^\infty  2^{-\ell\varepsilon_0},$$ with the right hand side of the inequality being independent of $k,$ we conclude that $M$ in \eqref{calderonkernels} is finite. So, we have prove \eqref{weak(1,1)inequalityWell} for $f\in L^p(G)\cap L^1(G)$ with $f\geq 0.$
     Note that if $f\in L^p(G)\cap L^1(G)$ is real-valued, one can decompose $f=f^+-f^-,$ as the difference of two non-negative functions, where $f^+,f^-\in L^p(G)\cap L^1(G), $ and $|f|=f^++f^{-}.$ Because $f^+,f^-\leq |f|,$ we have
     \begin{align*}
    \left|\left\{x\in G:|W_\ell f(x)|  >t \right\}\right|\\
    &\leq  \left|\left\{x\in G:|W_\ell f_+(x)|  >\frac{t}{2} \right\}\right| +\left|\left\{x\in G:|W_\ell f_{-}(x)|  >\frac{t}{2} \right\}\right|\\
    &\leq  \frac{C}{t}\Vert f_+\Vert_{L^1(G)}+\frac{C}{t}\Vert f_{-}\Vert_{L^1(G)}\\
    &\leq  \frac{2C}{t}\Vert f\Vert_{L^1(G)}.
\end{align*}A similar analysis, by splitting a complex function $f\in  L^p(G)\cap L^1(G)$ into its real and imaginary parts allows to conclude the weak (1,1) inequality \eqref{weak(1,1)inequalityWell} to complex functions.
     Thus, the proof of Lemma \ref{Lemma1} is complete. 
\end{proof}     
\begin{proof}[Proof of Lemma \ref{Lemma2}]
Now, we claim that 
\begin{equation}\label{weakvectorvalued}
    W:L^{1}(G,\ell^r(\mathbb{N}_0))\rightarrow L^{1,\infty}(G,\ell^r(\mathbb{N}_0)),\,\,\,1<r<\infty.
\end{equation}
extends to a bounded operator. For the proof of \eqref{weakvectorvalued}, we need to show that there exists a constant $C>0$ independent of $\{f_\ell\}\in L^1(G,\ell^r(\mathbb{N}_0))$ and $t>0,$ such that 
\begin{equation}
    \left|\left\{x\in G:\left(\sum_{\ell=0}^\infty|W_\ell f_\ell(x)|^r   \right)^{\frac{1}{r}}>t \right\}\right|\leq \frac{C}{t}\Vert \{f_\ell\} \Vert_{L^1(G,\ell^r(\mathbb{N}_0)}.
\end{equation} So, fix $\{f_\ell\}\in L^1(G,\ell^r(\mathbb{N}_0))$ and $t>0,$ and let $h(x):= \left(\sum_{\ell=0}^\infty| f_\ell(x)|^r   \right)^{\frac{1}{r}}, $ apply the Calder\'on-Zygmund decomposition Lemma to $h\in L^1(G),$ under the identification $G\simeq \mathbb{R}^n,$  (see e.g. Hebish \cite{Hebish}) in order to obtain a disjoint collection $\{I_j\}_{j=0}^{\infty}$ of disjoint open sets such that
\begin{itemize}
    \item $h(x)\leq t,$ for $a.e.$ $x\in G\setminus \cup_{j\geq 0}I_j,$\\
    
    \item $\sum_{j\geq 0}|I_j|\leq \frac{C}{t}\Vert h\Vert_{L^1(G)},$ and\\ 
    
    \item $t\leq \frac{1}{|I_j|}\int_{I_j}h(x)dx\leq 2t,$ for all $j.$
\end{itemize} Now, we will define a suitable decomposition of $f_\ell,$ for every $\ell\geq 0.$ Recall that   every $I_j$ is diffeomorphic to an open cube on $\mathbb{R}^n,$ that it is bounded, and that  $I_j\subset B(z_j,2R_j),$ where $z_j\in I_j$ (see Hebish \cite{Hebish}).  Let us define, for every $\ell,$ and $x\in I_j,$
\begin{equation}
    g_\ell(x):=\frac{1}{|I_j|}\int\limits_{I_j}f_\ell(y)dy,\,\,\,b_{\ell}(x)=f_\ell(x)-g_{\ell}(x).
\end{equation} and for $x\in G\setminus\cup_{j\geq 0}I_j,$
\begin{equation}
    g_\ell(x)=f_\ell(x),\,\,\, b_\ell(x)=0.
\end{equation} So, for a.e.  $x\in G,$ $f_\ell(x)=g_\ell(x)+b_\ell(x).$
Note that for every $1<r<\infty,$ $\Vert \{g_\ell\}\Vert_{L^r(\ell^r)}^r\leq t^{r-1}\Vert \{f_\ell\} \Vert_{L^1(\ell^r)}. $ Indeed, for $x\in I_j$, Minkowski integral inequality gives,
\begin{align*}
    \left(\sum_{\ell=0}^\infty|g_\ell(x)|^r\right)^{\frac{1}{r}} &\leq  \left(\sum_{\ell=0}^\infty\left|   \frac{1}{|I_j|}\int\limits_{I_j}f_\ell(y)dy \right|^r\right)^{\frac{1}{r}}\leq \frac{1}{|I_j|}\int\limits_{I_j}\left(\sum_{\ell=0}^\infty|   f_\ell(y) |^r\right)^{\frac{1}{r}}dy\\
    &=\frac{1}{|I_j|}\int\limits_{I_j}h(y)dy\\
    &\leq 2t.
\end{align*}
Consequently,
\begin{align*}
    \sum_{\ell=0}^\infty|g_\ell(x)|^r\leq (2t)^{r},
\end{align*}
and from the fact that $h(x)\leq t,$ for $a.e.$ $x\in G\setminus \cup_{j\geq 0}I_j,$ we have
\begin{align*}
    \Vert \{g_\ell\}\Vert_{L^r(\ell^r)}^r&=\int\limits_{G}\sum_{\ell=0}^\infty|g_\ell(x)|^rdx=\sum_{j}\int\limits_{I_j}\sum_{\ell=0}^\infty|g_\ell(x)|^rdx+\int\limits_{G\setminus \cup_{j}I_j}\sum_{\ell=0}^\infty|g_\ell(x)|^rdx\\
    &=\sum_{j}\int\limits_{I_j}\sum_{\ell=0}^\infty|g_\ell(x)|^rdx+\int\limits_{G\setminus \cup_{j}I_j}\sum_{\ell=0}^\infty|f_\ell(x)|^rdx\\
    &\leq \sum_{j}\int\limits_{I_j}(2t)^{r}dx+\int\limits_{G\setminus \cup_{j}I_j}h(x)^rdx\\
    &\lesssim t^{r}\sum_{j}|I_j|+\int\limits_{G\setminus \cup_{j}I_j}h(x)^{r-1}h(x)dx\\
    &\leq t^{r}\times \frac{C}{t}\Vert h\Vert_{L^1(G)}+t^{r-1}\int\limits_{G\setminus \cup_{j}I_j}h(x)dx\lesssim t^{r-1}\Vert h\Vert_{L^1(G)}\\
    &=t^{r-1}\Vert \{f_\ell\} \Vert_{L^1(\ell^r)}.
\end{align*}
Now, by using   the Minkowski and the Chebyshev inequality, we obtain 
\begin{align*}
    & \left|\left\{x\in G:\left(\sum_{\ell=0}^\infty|W_{\ell} f_\ell(x)|^r   \right)^{\frac{1}{r}}>t \right\}\right|\\
    &\leq  \left|\left\{x\in G:\left(\sum_{\ell=0}^\infty|W_{\ell} g_\ell(x)|^r   \right)^{\frac{1}{r}}>\frac{t}{2} \right\}\right|+ \left|\left\{x\in G:\left(\sum_{\ell=0}^\infty|W_{\ell} b_\ell(x)|^r   \right)^{\frac{1}{r}}>\frac{t}{2} \right\}\right|\\
    &\leq \frac{2^r}{t^r}\int\limits_{G}\sum_{\ell=0}^\infty|W_{\ell} g_\ell(x)|^rdx+ \left|\left\{x\in G:\left(\sum_{\ell=0}^\infty|W_{\ell} b_\ell(x)|^r   \right)^{\frac{1}{r}}>\frac{t}{2} \right\}\right|.
\end{align*} In view of Lemma \ref{Lemma1}, $W:L^r(G,\ell^r(\mathbb{N}_0))\rightarrow L^r(G,\ell^r(\mathbb{N}_0)),$ extends to a bounded operator and 
\begin{equation}
    \int\limits_{G}\sum_{\ell=0}^\infty|W_{\ell} g_\ell(x)|^rdx=\Vert W\{g_\ell\}\Vert_{L^r(\ell^r)}^r\lesssim  \Vert \{g_\ell\}\Vert_{L^r(\ell^r)}^r\leq t^{r-1}\Vert \{f_\ell\}\Vert_{L^1(\ell^r)}.
\end{equation}
Consequently,
 \begin{align*}
    & \left|\left\{x\in G:\left(\sum_{\ell=0}^\infty|W_{\ell} f_\ell(x)|^r   \right)^{\frac{1}{r}}>t \right\}\right|\\
    &\lesssim  \frac{1}{t}\Vert \{f_\ell\}\Vert_{L^1(\ell^r)}+ \left|\left\{x\in G:\left(\sum_{\ell=0}^\infty|W_{\ell} b_\ell(x)|^r   \right)^{\frac{1}{r}}>\frac{t}{2} \right\}\right|.
\end{align*}
Now, we only need to prove that
\begin{equation}
     \left|\left\{x\in G:\left(\sum_{\ell=0}^\infty|W_{\ell} b_\ell(x)|^r   \right)^{\frac{1}{r}}>\frac{t}{2} \right\}\right|\lesssim  \frac{1}{t}\Vert \{f_\ell\}\Vert_{L^1(\ell^r)}.   
\end{equation} 

Taking into account that $b_\ell\equiv 0$ on $G\setminus \cup_j I_j,$ we have that
\begin{equation}
    b_\ell=\sum_{k}b_{\ell,k},\,\,\,b_{\ell,k}(x)=b_{\ell}(x)\cdot 1_{I_k}(x).
\end{equation} Let us assume that $I_{j}^*$ is a open set, such that $|I_{j}^*|=K|I_{j}|$ for some $K>0,$ and $\textnormal{dist}(\partial I_{j}^*,\partial I_{j})\geq 4c\textnormal{dist}(\partial I_{j},e_{G}),$ where $c$ is defined in \eqref{ThLiWeak} and $e_{G}$ is the identity element of $G$.  So, by the Minkowski inequality we have,
\begin{align*}
     & \left|\left\{x\in G:\left(\sum_{\ell=0}^\infty|W_{\ell} b_{\ell}(x)|^r   \right)^{\frac{1}{r}}>\frac{t}{2} \right\}\right|\\
      &=\left|\left\{x\in \cup_j I_j^*:\left(\sum_{\ell=0}^\infty|W_{\ell} b_\ell(x)|^r   \right)^{\frac{1}{r}}>\frac{t}{2} \right\}\right|+\left|\left\{x\in G\setminus  \cup_j I_j^*:\left(\sum_{\ell=0}^\infty|W_{\ell} b_\ell(x)|^r   \right)^{\frac{1}{r}}>\frac{t}{2} \right\}\right|\\
      &\leq \left|\left\{x\in G:x\in \cup_j I_j^* \right\}\right|+\left|\left\{x\in G\setminus  \cup_j I_j^*:\left(\sum_{\ell=0}^\infty|W_{\ell} b_\ell(x)|^r   \right)^{\frac{1}{r}}>\frac{t}{2} \right\}\right|.
      \end{align*} Since $$  \left|\left\{x\in G:x\in \cup_j I_j^* \right\}\right| \leq \sum_{j}|I_j^*|,  $$ we have
      \begin{align*}  & \left|\left\{x\in G:\left(\sum_{\ell=0}^\infty|W_{\ell} b_\ell(x)|^2   \right)^{\frac{1}{2}}>\frac{t}{2} \right\}\right|\\
      &\leq\sum_{j}|I_j^*|+\left|\left\{x\in G\setminus  \cup_j I_j^*:\left(\sum_{\ell=0}^\infty|W_{\ell} b_\ell(x)|^2   \right)^{\frac{1}{2}}>\frac{t}{2} \right\}\right|\\
      &=K\sum_{j}|I_j|+\left|\left\{x\in G\setminus  \cup_j I_j^*:\left(\sum_{\ell=0}^\infty|W_{\ell} b_\ell(x)|^2   \right)^{\frac{1}{2}}>\frac{t}{2} \right\}\right|\\
      &\leq \frac{CK}{t}\Vert f\Vert_{L^1(G,\ell^r)}+\left|\left\{x\in G\setminus  \cup_j I_j^*:\left(\sum_{\ell=0}^\infty|W_{\ell} b_\ell(x)|^2   \right)^{\frac{1}{2}}>\frac{t}{2} \right\}\right|.
  \end{align*} Observe that  the Chebyshev inequality implies
  \begin{align*}
     & \left|\left\{x\in G\setminus  \cup_j I_j^*:\left(\sum_{\ell=0}^\infty|W_{\ell} b_\ell(x)|^r  \right)^{\frac{1}{r}}>\frac{t}{2} \right\}\right|\\
     &\leq\frac{2}{t}\int\limits_{ G\setminus  \cup_j I_j^*} \left(\sum_{\ell=0}^\infty|W_{\ell} b_\ell(x)|^r   \right)^{\frac{1}{r}}dx\\
     &=\frac{2}{t}\int\limits_{ G\setminus  \cup_j I_j^*} \left(\sum_{\ell=0}^\infty\left|\left(W_{\ell}\left(\sum_{k} b_{\ell,k}\right)   \right)(x)\right|^r\right)^{\frac{1}{r}}dx\\
     &=\frac{2}{t}\int\limits_{ G\setminus  \cup_j I_j^*} \Vert\{(W_{\ell}(\sum_{k} b_{\ell,k}) (x)\}_{\ell=0}^\infty\Vert_{\ell^r(\mathbb{N}_0)} dx\\
     &=\frac{2}{t}\int\limits_{ G\setminus  \cup_j I_j^*} \Vert\{\sum_{k} (W_{\ell}b_{\ell,k})(x) \}_{\ell=0}^\infty\Vert_{\ell^r(\mathbb{N}_0)} dx\\
     &\leq \frac{2}{t}\sum_{k}\int\limits_{ G\setminus  \cup_j I_j^*} \left(\sum_{\ell=0}^\infty\left|\left(W_{\ell}b_{\ell,k}   \right)(x)\right|^r\right)^{\frac{1}{r}}dx.
  \end{align*}
Now, if $\kappa_\ell$ is the right convolution Calder\'on-Zygmund kernel of     $W_{\ell} ,$ (see Remark \ref{theremarkof}), and by using that $\int_{I_k}b_{k,\ell}(y)dy=0,$ we have that
\begin{align*}
    \left(\sum_{\ell=0}^\infty\left|\left(W_{\ell}b_{\ell,k}   \right)(x)\right|^r\right)^{\frac{1}{r}}&=\left(\sum_{\ell=0}^\infty\left|b_{\ell,k}\ast \kappa_{\ell}(x)   \right|^r\right)^{\frac{1}{r}}\\
    &=\left(\sum_{\ell=0}^\infty\left| \int\limits_{I_k}\kappa_\ell(y^{-1}x)b_{\ell,k}(y)dy-\kappa_{\ell}(x)\int\limits_{I_k}b_{\ell,k}(y)dy \right|^r\right)^{\frac{1}{r}}\\
       &=\left(\sum_{\ell=0}^\infty\left| \int\limits_{I_k}(\kappa_\ell(y^{-1}x)-\kappa_{\ell}(x))b_{\ell,k}(y)dy \right|^r\right)^{\frac{1}{r}}.
\end{align*} Now, we will proceed as follows. By using that
$
    |b_{\ell,k}(y)|^r\leq \sum_{\ell'=0}^{\infty}|b_{\ell',k}(y)|^r,
$ we have, by an application of the Minkowski integral inequality,
\begin{align*}
    &\left(\sum_{\ell=0}^\infty\left|\left(W_{\ell}b_{\ell,k}   \right)(x)\right|^r\right)^{\frac{1}{r}}= \left(\sum_{\ell=0}^\infty\left| \int\limits_{I_k}(\kappa_\ell(y^{-1}x)-\kappa_{\ell}(x))b_{\ell,k}(y)dy \right|^r\right)^{\frac{1}{r}}\\
    &\leq \int\limits_{I_k}\left(   \sum_{\ell=0}^\infty |\kappa_\ell(y^{-1}x)-\kappa_{\ell}(x)|^r|b_{\ell,k}(y)|^r\right)^{\frac{1}{r}}dy\\
    &\leq  \int\limits_{I_k}\left(   \sum_{\ell'=0}^{\infty}|b_{\ell',k}(y)|^r\right)^{\frac{1}{r}}\left(   \sum_{\ell=0}^\infty |\kappa_\ell(xy^{-1})-\kappa_{\ell}(x)|^r\right)^{\frac{1}{r}}dy.
\end{align*} 
Consequently, we deduce,
\begin{align*}
   & \frac{2}{t}\sum_{k}\int\limits_{ G\setminus  \cup_j I_j^*} \left(\sum_{\ell=0}^\infty\left|\left(W_{\ell}b_{\ell,k}   \right)(x)\right|^r\right)^{\frac{1}{r}}dx\\
   &\leq \frac{2}{t} \sum_{k}\int\limits_{ G\setminus  \cup_j I_j^*} \int\limits_{I_k}\left(   \sum_{\ell'=0}^{\infty}|b_{\ell',k}(y)|^r\right)^{\frac{1}{r}}\left(   \sum_{\ell=0}^\infty |\kappa_\ell(y^{-1}x)-\kappa_{\ell}(x)|^r\right)^{\frac{1}{r}}dy                   dx\\
   &= \frac{2}{t} \sum_{k} \int\limits_{I_k}  \int\limits_{ G\setminus  \cup_j I_j^*} \left(   \sum_{\ell'=0}^{\infty}|b_{\ell',k}(y)|^r\right)^{\frac{1}{r}}\left(   \sum_{\ell=0}^\infty |\kappa_\ell(y^{-1}x)-\kappa_{\ell}(x)|^r\right)^{\frac{1}{r}}dx                  dy\\
    &= \frac{2}{t} \sum_{k} \int\limits_{I_k}  \left(   \sum_{\ell'=0}^{\infty}|b_{\ell',k}(y)|^r\right)^{\frac{1}{r}}  \int\limits_{ G\setminus  \cup_j I_j^*} \left(   \sum_{\ell=0}^\infty |\kappa_\ell(y^{-1}x)-\kappa_{\ell}(x)|^r\right)^{\frac{1}{r}}dxdy.
\end{align*}
By following \cite[Page 17]{CardonaRuzhanskyBesovSpaces},  for $x\in G\setminus \cup_jI_j^*,$  and  $y\in I_{k},$ we have  $4c|y|=4c\times \textnormal{dist}(y,e_G) \lesssim  \textnormal{dist}(\partial I_{k}^*,\partial I_{k})  \leq|x|.$ So,  $$\{x\in G: x\in G\setminus \cup_jI_j^*\}\subset\{x\in G:\textnormal{ for all } z\in {I_k},\,\,\, 4c|z|\leq |x| \}.$$ Now, from the estimate \eqref{ThLiWeak} in Remark \ref{theremarkof}, we deduce 
\begin{align*}
  & \int\limits_{ G\setminus  \cup_j I_j^*} \left(   \sum_{\ell=0}^\infty |\kappa_\ell(y^{-1}x)-\kappa_{\ell}(x)|^r\right)^{\frac{1}{r}}dx \leq \int\limits_{ G\setminus  \cup_j I_j^*} \sum_{\ell=0}^\infty |\kappa_\ell(y^{-1}x)-\kappa_{\ell}(x)|dx\\
   &\leq  \sum_{\ell=0}^\infty \int\limits_{ G\setminus  \cup_j I_j^*}|\kappa_\ell(y^{-1}x)-\kappa_{\ell}(x)|dx\\\
   &\leq   \sum_{\ell=0}^\infty \int\limits_{|x| >4c|y|}|2^{-\ell Q}\kappa_\ell(2^{-\ell }\cdot y^{-1}x)-2^{-\ell Q}\kappa_{\ell}(2^{-\ell }\cdot x)|dx\\
   &\lesssim  \sum_{\ell=0} 2^{-\ell \varepsilon_0}=O(1).
\end{align*}Thus, we have proved that
\begin{align*}
      &\left|\left\{x\in G:\left(\sum_{\ell=0}^\infty|W_{\ell} b_\ell(x)|^r   \right)^{\frac{1}{r}}>\frac{t}{2} \right\}\right|\lesssim  \frac{2}{t} \sum_{k} \int\limits_{I_k}  \left(   \sum_{\ell'=0}^{\infty}|b_{\ell',k}(y)|^r\right)^{\frac{1}{r}}dy \\
      &=\frac{2}{t}  \int\limits_{\cup_{k}I_{k}}  \left(   \sum_{\ell'=0}^{\infty}|b_{\ell'}(y)|^r\right)^{\frac{1}{r}}dy \\
      &\lesssim \frac{1}{t}\Vert \{f_\ell\}\Vert_{L^1(\ell^r)}. 
\end{align*} Thus, the proof of the weak (1,1) inequality is complete and we have that \begin{equation}\label{weakvectorvalued2}
    W:L^{1}(G,\ell^r(\mathbb{N}_0))\rightarrow L^{1,\infty}(G,\ell^r(\mathbb{N}_0)),\,\,\,1<r<\infty,
\end{equation} admits a bounded extension. The proof of Lemma \ref{Lemma2} is complete.
\end{proof}
Now, in view of Lemmas \ref{Lemma1} and \ref{Lemma2},
and the duality argument in Remark \ref{RemarkInterpolation}, we have proved Theorem  \ref{HMTTL}. 
\end{proof}

\bibliographystyle{amsplain}

\end{document}